\keywords{DNA topology, DNA supercoiling, random knots, writhe}
\newtheorem{thm}{Theorem}
\newtheorem{cor}[thm]{Corollary}
\newtheorem{prop}[thm]{Proposition}
\newtheorem{conj}[thm]{Conjecture}
\begin{document}


\title{Typical knots: size, link component count, and writhe}
\author{Margaret Doig}
\affiliation{Department of Mathematics, Creighton University}
\email{margaretdoig@creighton.edu}


\begin{abstract}
We model the typical behavior of knots and links using grid diagrams. Links are ubiquitous in the sciences, and their \emph{normal} or \emph{typical} behavior is of significant importance in understanding situations such as the topological state of DNA or the statistical mechanics of ring polymers. We examine three invariants: the expected size of a random knot; the expected number of components of a random link; and the expected writhe of a random knot. We investigate the first two numerically and produce generating functions which codify the observed patterns: knot size is uniformly distributed and linearly dependent upon grid size, and the number of components follows a distribution whose mean and variance grow with $\log_2$ of grid size; in particular, for any fixed $k$, the $k$-component links grow vanishingly rare as grid size increases. Finally, we observe that the odd moments of writhe vanish, and we perform an exploratory data analysis to discover that variance grows with the square of grid size and kurtosis is constant at approximately 3.5. We continue this project in a future work,\cite{doigtypicalii} where we investigate genus and the effects of crossing change on it.
\end{abstract}

\maketitle

\section{Knots and links as outcomes in a probability space}

It is common (and enlightening) to study knots and links with extremal properties or to search for classifications and definite statements: \emph{Can I draw this particular knot with fewer crossings?} and: \emph{How many knots have a diagram with 7 crossings?} or: \emph{Is a minimal diagram for an alternating knot alternating?} and even: \emph{Can this invariant detect the unknot?} 

This is not our approach; rather, we pursue a complementary path, to understand the normal behavior of knots and links. We wish to investigate the family of links as a probability space and ask about the properties a typical or random link. There are many concrete topological reasons to study random links: \emph{If I doodle some 7-crossing knots, will there be interesting examples, or will I end up with a bunch of unknots?} or: \emph{If I study this invariant for pretzel knots, will it be representative of the behavior of knots in general?} and: \emph{Seifert genus basically grows with crossing number, but how fast, and how consistently?} or finally: \emph{If I implement my algorithm for this invariant, will it usually finish in a reasonable amount of time?}

Similar questions about the typical behavior of knots also arise from a number of applications: cellular processes are affected by the knotting and writhing of DNA (see Section~\ref{sect:dna_top}), the statistical mechanics of ring polymers seem to depend on their knot types (see Section~\ref{sect:polymers}), and even a new cryptographic scheme depends on whether the knot used as a key is sufficiently hard to approximate.\cite{knotcrypto}

\section{Motivations}

\subsection{Knots and links as models for DNA and other polymers}

Knots appear everywhere in nature: in polymers, DNA, networks, even headphone cords. We mathematically define a \emph{knot} to be a closed curve in 3-D space, or, more precisely, an embedding of the circle $S^1$ into the 3-sphere $S^3$; a \emph{link} is a set of component knots which may be entangled with one another in space. See Figure~\ref{fig:trefoil}. 

Knots and links model single-strand chain polymers, and they even model DNA reasonably well - even though DNA consists of two strands, they are complementary and twist into a single curve with rare effects for topology other than that they resist twisting (which we study in Section~\ref{sect:writhe}). Additionally, while eukaryotic DNA is technically not a closed loop as our definition of a knot requires, it is divided into large domains whose ends are fixed in the cellular structure away from the location of cellular processes and cannot be practically recruited to alter local topology. Therefore, we may reasonably model it with closed curves. Similarly, long linear polymers often locally look entangled in a way which meets our intuition of a knot, and we may justify modeling them with knots by imagining the ends to be fixed in space some distance away from the action.

We have selected the grid diagram as our basic model for knots and links: we place arcs of variable length within a grid subject to some minor technical constraints, and we add crossings to resolve singularities. Under Even-Zohar's classification of knot models,\cite{evenzoharmodels} grid diagrams fall into the category of models with variable-length segments. Surprisingly, they do not appear to be limited by their right-angled or superficially 2-D nature. There are good biological and physical reasons to consider this model to be sufficiently representative of many situations in which random knots appear in nature. Some of our questions below have previously been addressed for other models, mostly from the category of constant-length segments, which seem to display distinct behavior. See Section~\ref{sect:model} for details.

\begin{figure}
\resizebox{0.7\linewidth}{!}{\input{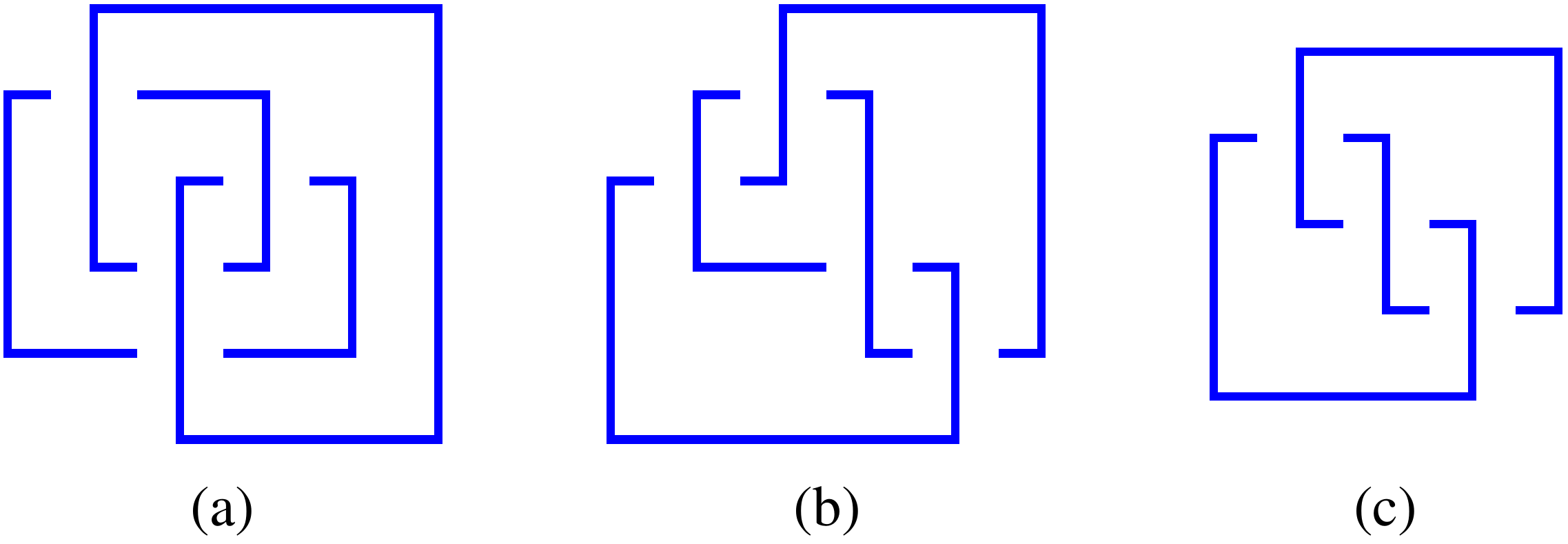_t}}
\caption{\raggedright (a) An unusual diagram for the unknot; (b) the figure 8 knot; (c) the trefoil knot.}\label{fig:trefoil}
\end{figure}

\subsection{Polymer dynamics and knot invariants}\label{sect:polymers}

The topology of polymers appears to influence their statistical mechanics. For example, the complexity of knotting of a long polymer chain can affect its response to weak mechanical stretching,\cite{caraglio} its friction when passing through a constrained hole or pore,\cite{plesa2016direct} and its mobility when forced to pass through a gel or other resistant environment, as in electrophoresis.\cite{arsuaga2002knotting, cebrian2015electrophoretic, stasiak1996electrophoretic, olavarrieta2002supercoiling, trigueros2001novel, trigueros2007production, weber2006numerical, weber2006simulations} Knotting of a polymer ring can likewise affect the standard size of the ring, the equilibrium relaxation time, its general motion, and the diffusion constant, among other properties.\cite{orlandini2017statics} From the other direction, the formation of knotting and its complexity can be affected by degree of polymerization, quality of solvent, temperature, and confinement of the polymer.\cite{orlandiniwhittington}

Our first invariant, studied in Section~\ref{sect:knot_size}, is motivated by the idea of building a random knot one segment at a time until it closes when the ends collide. We reproduce this by constraining ourselves within an $n\times n$ grid and placing consecutive horizontal and vertical arcs of varying sizes (with some minor restrictions to maintain the integrity of the knot diagram) until the knot closes, and we count the number of arcs used. We find that it is uniformly distributed among all possible values (all even numbers from 4 to $2n$). A number of theoretical and experimental works have previously examined scaling behavior of knots such as the relative dimensions of the knot compared to the number of arcs which make it up,\cite{descloizeaux, orlandinitesi, deutsch, grosberg, shimamura, dobay, matsuda, lgm1, lgm2, uehara} and this effort grows out of the investigation of random walks, both self-avoiding and not.\cite{flory, degennes, doi} The previous theoretical results have primarily used models from the category of constant-length segments, which seem to behave differently from grid diagrams and other models using variable-length segments. 

Our second invariant, examined in Section~\ref{sect:numb_comp}, is motivated by the idea of encountering a random link of a given size and counting how many components make up the link. We model this by selecting an $n\times n$ grid diagram (see Section~\ref{sect:model}) and checking the number of components. As one may suspect, it is normally distributed, with mean and moments we derive from a generating function. Additionally, links of any given number of components become vanishingly rare as the link size grows. A similar question has previously been studied for random links in the braid and bridge presentation models,\cite{ma} which fall into the same category as grid diagrams, but this question has surprisingly has not been analyzed for most models, including grid diagrams. It is similar to but distinct from the question of counting cycle lengths within permutations.

\subsection{DNA topology and writhe}\label{sect:dna_top}
Our third and final invariant in Section~\ref{sect:writhe}, writhe, is motivated primarily by DNA topology. Writhe is the twisting or looping of a curve projected onto the plane. It reflects an effect most gardeners are familiar with: a carelessly coiled garden hose will resist being straightened out and will form kinks or supercoils. DNA molecules experience similar effects on a grander scale, and the resulting topology has a significant effect on cellular processes. 

DNA topology is typically described by three values, the \emph{linking number} (the total number of times the two strands link with each other), the \emph{twist} (the total number of complete helical turns of a pair), and the \emph{writhe} (which describes the greater topological structure, the number of times the helix crosses over itself, with sign). The linking number is invariant under local deformations of the DNA, while the twist and writhe are related to one another (under appropriate normalization, they sum to the linking number) and may be altered by cellular processes. The thermodynamically preferred relaxed B-DNA form has one full twist approximately every 10 base pairs, and torsional stress will tend to convert some of any over- or under-twisting into supercoiling of the molecule, or writhe. In nature, DNA is rarely found in its relaxed form and is on average underwound by about 6\%.\cite{vologodskiicozzarelli}

Genetic processes often alter the supercoiling or writhe of a molecule. Transcription involves the movement of RNA polymerase down the strand to unwind, transcribe, and rewind the twin strands. If unconstrained, the polymerase would follow the twist and proceed in a screw-like fashion down the helix. In the twin-domain supercoiling model of Liu and Wang,\cite{liuwang} though, the polymerase is not unconstrained, and transcription results in positive coiling ahead and negative coiling behind. While these supercoils do not alter the global topology from a mathematical point of view, they will not locally resolve one other in nature. Replication also results in coiling: when the DNA forks for replication, positive supercoils form ahead of the fork, and a precatenane, or positive winding of the two partially replicated strands, forms behind the fork. If not removed after replication, the precatenane will convert to a catenane, a linkage between the resulting sister chromosomes.\cite{champouxbeen}

This topological state of DNA significantly affects its availability for genetic processes. Negatively supercoiled molecules, as generally found in cells, favor reactions that require unwinding the helix such as transcription and replication, and positive supercoiling favors processes which require rewinding, such as the rescue of a stalled replication fork. The presence of a catenane itself is debilitating during cytokinesis. Topoisomerases generally police the topological state of the DNA, where they sever a strand, change a crossing, and reattach the strand, thereby removing a positive supercoiling after replication, eliminating a linkage between chromosomes, or otherwise maintaining the topological stability of the molecule.\cite{deweeseosheroff,espelimarians,seolneuman} Other enzymes appear to act by a related set of moves, where they replace one tangle by another.\cite{darcyvazquez} 

We model writhe by selecting a random knot in an $n\times n$ grid diagram and counting its crossings, with sign. We compare the observed writhe to both the size of the diagram $n$ and the actual length of the knot.

\section{Mathematical prerequisites}

The attentive mathematical reader will have noted that we frequently refer to a generic link as a knot, even though it may have more than one component. We beg the reader's indulgence for the sake of connecting to our audience's intuitive understanding of knotted objects in the world around us.

\subsection{Grid diagrams as a model for random links}\label{sect:model}
Numerous models are available for studying the typical or average knot. Following the naming convention of Even-Zohar,\cite{evenzoharmodels} the first set, which can be called the set of 1-D models, includes random walks in grids and random polygonal walks, both of which involve successively adding small steps of standardized size and which are inclined towards local knotting and easily generate knots and links which are connected sums. The next set, the 2-D models, includes models which are in some sense less linear, like random planar diagrams or planar curves, or the knot tables themselves; these are still inclined towards non-prime knots (except for the knot tables, where these have been deliberately removed) and satellite knots (ex, double figure 8 configurations are common). The last set of the so-called 3-D models includes random jumps (whose difference from polygonal walks is that the length of each successive edge may vary), the Petaluma model, and the grid diagram. In each of these, the successive steps tend to be comparable to the knot diameter, and it is conjectured that they are overwhelming prime and hyperbolic and that various invariants obey distributions which seem intuitively reasonable.\cite[Section 5.4]{evenzoharmodels}

It is not clear which category DNA knots and other polymers fall into. If one imagines a polymer forming by the slow addition of successive base pairs or other small units to a longer chain, then an ideal model would be from the 1-D category, such as Brownian motion or lattice walks. On the other hand, 3-D category models show some promise for modeling knot formation by topoisomerase and other cellular actors which spontaneously change crossings or tangles or otherwise alter a preexisting knot or link.\cite{barbensi2020grid} 

We have selected the \emph{grid diagram} as our model. A grid diagram of size $n$ as in Figure~\ref{fig:grid_diagram} consists of an $n\times n$ grid drawn on a torus with $n$ black dots and $n$ white dots filled in so that each row and each column has a single black dot and a single white dot, in different cells. A grid diagram corresponds to a link: Insert an arc in each column from the black dot to the white dot; in each row, insert an arc from the white dot to the black dot. When two arcs intersect, replace the intersection by a crossing with the vertical arc on top. 

There are three \emph{Cromwell moves} corresponding to the Reidemeister moves which change the appearance of the diagram but do not change the type of the underlying knot or link: commutations (where two arcs in successive rows or columns are exchanged); translations (where the arc in the top row is moved to the arc in the bottom, or the left to the right); and stabilizations/destabilizations (there are four types). Figure~\ref{fig:grid_diagram} shows a trefoil and its corresponding grid diagram. See Cromwell\cite{cromwell} for a thorough introduction to grid diagrams. 

\begin{figure}
\begin{center}
\resizebox{0.75\linewidth}{!}
{\input{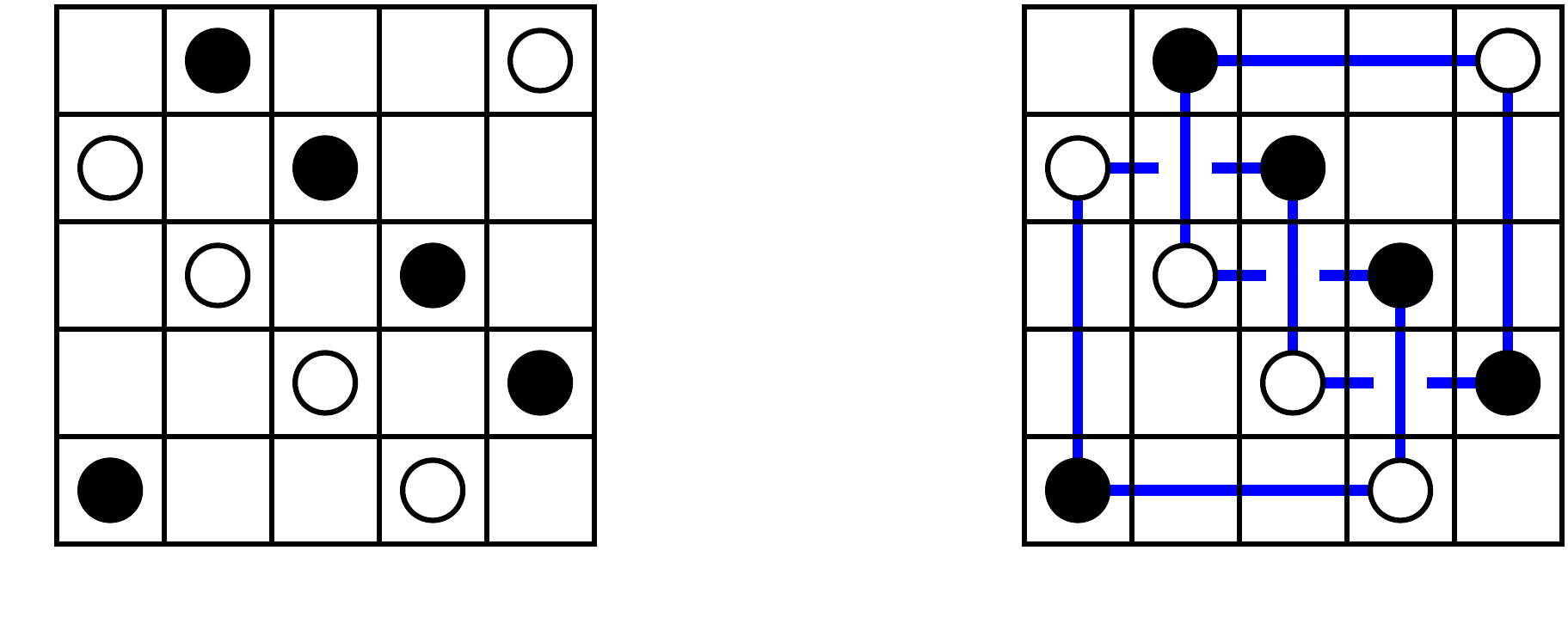_t}}
\end{center}
\caption{\raggedright  A grid diagram and corresponding knot diagram for the trefoil knot. The edges may be oriented so vertical arcs are directed from the black dot to the white dot; in other words, the left most arc points upwards.}\label{fig:grid_diagram}
\end{figure}

\subsection{Combinatorial details}\label{sect:comb}
We have several choices to encode a grid diagram combinatorially. For example, we may record the row and column coordinates of each dot; for simplicity, we could record the  column coordinates only, reading from the top row to the bottom, which would give two permutation of the numbers $1, 2, \cdots, n$ (equivalently, two $n$-cycles from $S_n$). This serves well for describing any known grid diagram, but it is challenging to generate a random diagram this way: two such permutations would generate a set of black and white dots, a pair for each row and a pair for each column, but we could have collisions between the black and white dots. In Sections~\ref{sect:numb_comp} and \ref{sect:writhe}, we will use this method to generate random links: we will select two permutations of $1, 2, \cdots, n$ and eliminate any pair that induces collisions. Equivalently, we could select a permutation to give the column coordinates of the black dots reading from top to bottom, and then a derangement (a permutation which does not leave any element in its original position) which tells us the column for a white dot based on the column for each black dot. Using the latter description, since there are $n!$ permutations and $!n$ derangements of $n$ elements, we see there are $n!\cdot !n$ possible $n \times n$ grid diagrams.

In the case of knots, we could select a starting point at some black dot on the knot and then trace through the knot, recording in parallel the order in which the knot visits the columns and the rows; in this case, we again record two permutations. Observe that any knot corresponds to exactly $n$ different permutations, depending on which black dot was selected as the starting point. We may easily randomly generate knots by selecting two permutations. We use this encoding or a slight variation for the knots in Sections~\ref{sect:knot_size} and \ref{sect:writhe}. Using this description, we may easily see that there are $n!(n-1)!$ distinct knot diagrams in an $n\times n$ grid.

\subsection{Implementation}

All calculations were performed within the author's toolkit and are available to the public for further experimentation.\cite{dmt} Random permutations were generated using the default\_random\_engine in c++14, seeded with system clock time. Data analysis (including all curve fitting) was performed in JMP, except for the calculations of higher moments and kurtosis, which were performed in Microsoft Excel; knot and grid diagrams were generated in xfig, sample mass functions in Excel, and other graphs in JMP.

\section{The size of a typical knot}\label{sect:knot_size}

We warm up by studying the size of a knot randomly drawn within an $n\times n$ grid. As a polymer might form and then close up in 3-D space, we envisage a knot building up arc by arc inside a grid diagram, but we allow the knot to close off at any time that the end touches the beginning. In particular, we do not require that the knot fill out the entire grid diagram. We will see that the knot size is uniformly distributed, which supports the use of grid diagrams to model polymeric behavior: if a polymer's building blocks are of width $1$ and variable length, and we wish to study a polymer constrained within an $n\times n$ area, then we should model the polymer by looking at an $n\times n$ grid diagram. As expected, the polymer is equally likely to close up after any given number of building blocks are added, within the constraint imposed by the area.

We model the question numerically, and then we produce a probability generating function to summarize behavior and verify the reasonableness of our model. One may argue that the combinatorics of this example are obvious; nevertheless, we produce the simulation to argue for the applicability of the grid diagram model to questions of polymer topology. 

\subsection{Numerical simulation}
Recall the combinatorial encoding of a knot from Section~\ref{sect:comb}: an $n\times n$ knot is represented by two $n$-permutations $\rho = (\rho_1~\rho_2 ~\cdots ~ \rho_n)$ and $\kappa=(\kappa_1~\kappa_2~\cdots~\kappa_n)$ which tell us the order in which we visit the rows and the columns, respectively; that is, black dots would be located at the points with coordinates $(\rho_i, \kappa_i)$ as the white dots would be located at the $(\rho_{i+1}, \kappa_i)$. To generate random knots living inside an $n\times n$ grid, we assume without loss of generality that $\rho_1 = 1$ and then select two random $n$-permutations $\rho$ and $\kappa$. We read off $\kappa_1$ from the front of $\kappa$, then $\rho_2$ from $\rho$, and so on; note that the $\kappa_i$s will not repeat, and the $\rho_i$s will not repeat with the possible exception that some $\rho_s$ may be 1, in which case we will declare that the knot closes up at size $s$. (We omit the artificial case where $\rho_2 = 1$ and the knot is empty.) This is not exactly the combinatorial encoding for a grid diagram presented in Section~\ref{sect:comb}, but it could be converted easily by removing the remaining $n-s$ unused row/column numbers.

\begin{figure}
\begin{subfigure}[b]{0.45\textwidth}
\includegraphics[width=\textwidth]{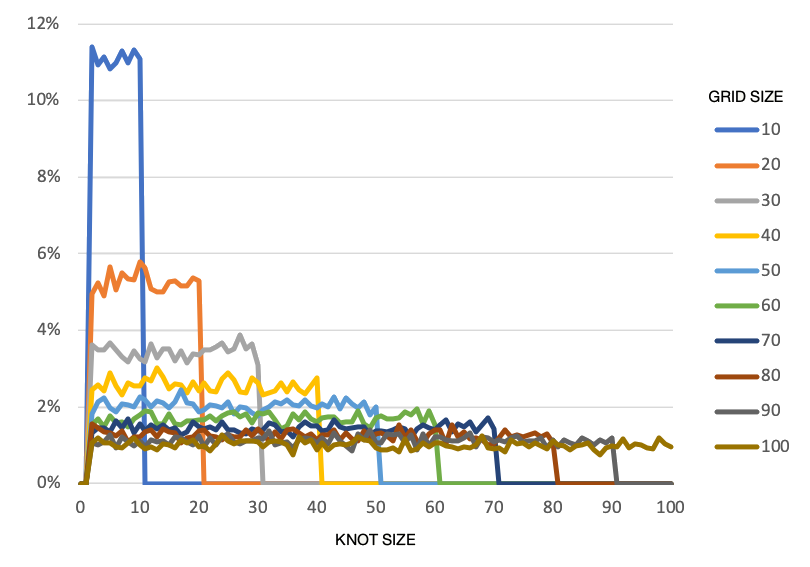}
\caption{\raggedright } \label{fig:knot_size_1}
\end{subfigure}
\begin{subfigure}[b]{0.45\textwidth}
\includegraphics[width=\textwidth]{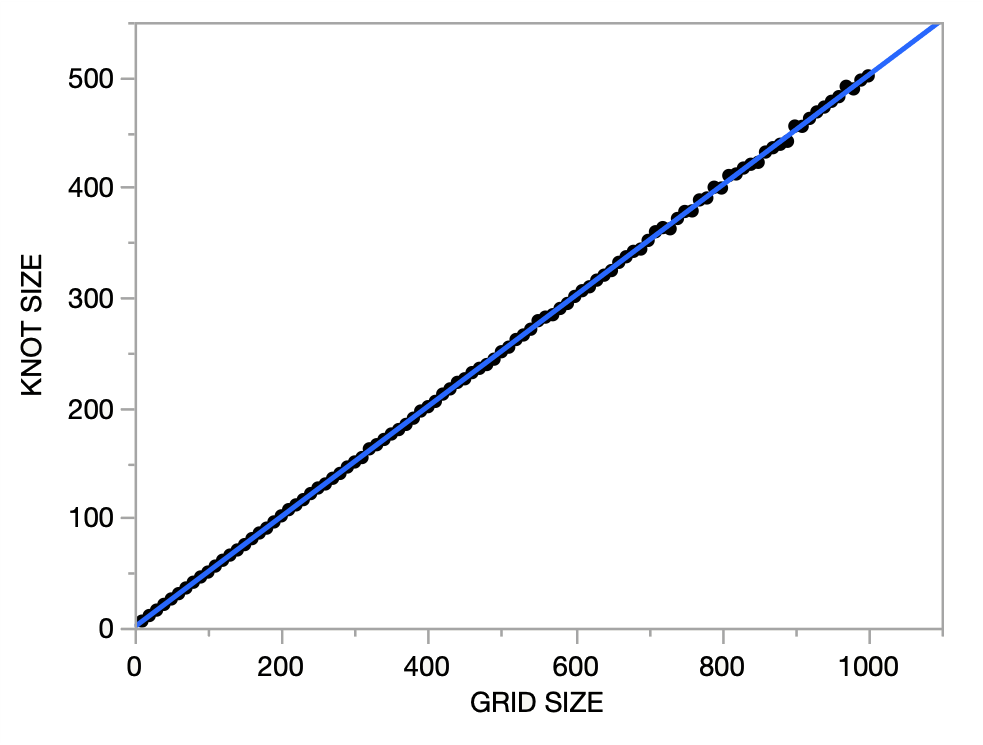}
\caption{\raggedright } \label{fig:knot_size_2}
\end{subfigure}
\caption{\raggedright The size of knots randomly drawn in an $n \times n$ grid: (\subref{fig:knot_size_1}) the sample mass functions for $n \leq 100$; (\subref{fig:knot_size_2}) mean knot size compared to grid size for $n \leq 1000$.}\label{fig:knot_size}
\end{figure}

Randomly generating 10,000 knots in an $n \times n$ grid for each of $n=10, 20, \cdots, 1000$, we see in Figure~\ref{fig:knot_size_1} that the size $s$ of these knots is uniformly distributed from $2$ to $n$. We may fit the average knot size to the grid size $n$ as in Figure~\ref{fig:knot_size_2}:
\[\overline{s}(n) = 0.7291814 + 0.50085096n\] 
with $R^2 = 0.999879$. 

\subsection{Mathematical analysis}

Conveniently, we may directly calculate the expected size of a knot randomly drawn in an $n\times n$ grid, and it agrees with our experimental estimate and expands our observations:

\begin{thm}\label{thm:knot_size}
Consider a knot drawn randomly inside an $n \times n$ grid. The knot size is uniformly distributed from 2 to $n$. That is, the probability $p_s$ that the knot will be size $s$ is:
\[p_s = \frac{1}{n-1}, \qquad 1 < s \leq n.\]
Then the expected value of the knot size is
\[EV(s) = \frac{n}{2}+1,\]
the variance is 
\[Var(s) = \frac{n^2-2n}{12},\]
and the $k^{th}$ moment is
\[\mu'_k(s) = \frac{1}{n-1}\sum_{i=2}^n i^k.\]
\end{thm}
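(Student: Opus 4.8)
The plan is to establish the uniform distribution first, since the expected value, variance, and $k^{th}$ moment all follow as routine consequences of knowing $p_s = \frac{1}{n-1}$ for $1 < s \leq n$. So the real content is the claim that the knot closes up at size $s$ with equal probability for every admissible $s$.

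First I would set up the probability space precisely following the generation procedure in Section~\ref{sect:comb}: fix $\rho_1 = 1$, and choose $\rho = (\rho_1~\rho_2~\cdots~\rho_n)$ and $\kappa = (\kappa_1~\kappa_2~\cdots~\kappa_n)$ uniformly at random among $n$-permutations with $\rho_1 = 1$. The key observation is that the closing size $s$ depends only on $\rho$, not on $\kappa$ at all: the knot closes at the first index $s \geq 2$ with $\rho_s = 1$. (The column permutation $\kappa$ only affects which knot we get, not when it closes.) So the question reduces to a pure statement about a uniformly random permutation $\rho$ of $\{1, \ldots, n\}$ conditioned on $\rho_1 = 1$: letting $s$ be the position of the next occurrence of the value $1$... but wait, $1$ appears only once. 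I need to be careful here: re-reading the generation, the $\rho_i$ are read off a single $n$-permutation, so the value $1$ sits at exactly one position, and that position (if $> 2$, excluding the degenerate $\rho_2 = 1$ case) is exactly $s$. Thus $s$ is just the position of the value $1$ in a uniformly random permutation of $\{2, 3, \ldots, n\}$ placed in slots $2, \ldots, n$ — equivalently, $s - 1$ is uniform on $\{1, \ldots, n-1\}$, giving $\Pr(s = j) = \frac{1}{n-1}$ for $2 \leq j \leq n$. (One should double-check whether the degenerate $\rho_2 = 1$ case is genuinely excluded by fiat and whether this changes the normalization; I believe the statement's "$1 < s \le n$" already reflects this convention, so no adjustment is needed.)

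Next I would compute the three moment formulas. For the expected value, $EV(s) = \frac{1}{n-1}\sum_{i=2}^{n} i = \frac{1}{n-1}\left(\frac{n(n+1)}{2} - 1\right) = \frac{1}{n-1} \cdot \frac{(n-1)(n+2)}{2} = \frac{n+2}{2} = \frac{n}{2} + 1$, which matches. For the variance, I would use $Var(s) = EV(s^2) - EV(s)^2$ with $EV(s^2) = \frac{1}{n-1}\sum_{i=2}^{n} i^2 = \frac{1}{n-1}\left(\frac{n(n+1)(2n+1)}{6} - 1\right)$, factor the numerator (it has $n-1$ as a factor), and simplify against $\left(\frac{n+2}{2}\right)^2$ to arrive at $\frac{n^2 - 2n}{12}$. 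The general $k^{th}$ raw moment is immediate from the definition: $\mu'_k(s) = \sum_{i} i^k p_i = \frac{1}{n-1}\sum_{i=2}^{n} i^k$.

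The main obstacle — really the only subtlety — is pinning down the correct sample space and the treatment of the degenerate closure at size $2$, i.e., making sure the conditioning on $\rho_1 = 1$ and the exclusion of the "empty knot" case ($\rho_2 = 1$) are handled so that the normalization comes out to exactly $n-1$ equally likely outcomes; once that bookkeeping is settled, the reduction to the position of a single value in a uniform random permutation makes the uniformity transparent, and the moment computations are elementary arithmetic with the standard power-sum formulas.
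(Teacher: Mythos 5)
Your proposal is correct, and the heart of it --- identifying the knot size $s$ with the position of the value $1$ in a uniformly random permutation, so that after excluding the degenerate case $s$ is uniform over the $n-1$ positions $2,\ldots,n$ --- is exactly the paper's argument; your added remark that $s$ is independent of $\kappa$ is implicit there. The only divergence is in how the moments are obtained: you read $\mu'_k(s) = \frac{1}{n-1}\sum_{i=2}^n i^k$ directly off the definition of a raw moment and get the variance from $\mu'_2 - (\mu'_1)^2$ with the standard power-sum identities, whereas the paper first writes down the probability generating function $P(t) = \frac{t^2(1-t^{n-1})}{(n-1)(1-t)}$ and extracts the moments from the exponential generating function $P(e^t)$. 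Both are valid; your route is the more economical one for this particular statement, while the paper's detour through generating functions is largely a deliberate warm-up for the machinery it relies on in the link-component analysis later. Your flagged concern about the normalization is the right thing to worry about, and your resolution (the convention $1 < s \le n$ leaves exactly $n-1$ equally likely outcomes) matches the paper's.
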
 

\begin{proof}
We see immediately that the probability of a $1$ appearing in the $s^{th}$ position in a random permutation of $1, 2, \cdots, n$ is exactly $\frac{1}{n}$; omitting the cases where $1$ appears in the first term, we have $p_s = \frac{1}{n-1}$.

Observe that $\sum p_s = 1$ as it should be, and the expected value of the knot size is therefore
\[EV(s) = \sum_{s=1}^n sp_s = \frac{1}{n-1}\sum_{s=2}^n s = \frac{n}{2}+1.\]
We observe also that we could establish an (ordinary) probability generating function for $p_s$; that is, we can build a function $P(t) = \sum_{k=0}^\infty p_s t^s$ which neatly encapsulates all the $p_s$ within a single series. We can recover the $p_s$ either from the series expansion of $p_s$ or by noting $\frac{\partial^sP}{\partial x^s}(0) = s!p_s$. In this case:
\[P(t) = \frac{1}{n-1} \sum_{s=2}^n t^s = \frac{t^2 + t^3 + \cdots + t^n}{n-1} = \frac{t^2(1-t^{n-1})}{(n-1)(1-t)}.\]
Recall that an exponential generating function $g(x)$ for a sequence $a_k$ is a series expansion $g(x) = \sum_{k=0}^\infty a_k \frac{x^k}{k!}$; it is an elementary fact that, if $P(t)$ is a probability generating function for a variable, then $P(e^t)$ is an exponential generating function for its (non-central) moments $\mu'_k$, 
\begin{multline*}
P(e^t) = \frac{e^{2t}+e^{3t}+\cdots +e^{nt}}{n-1} \\ 
= \frac{1}{n-1} \left(\sum_{k=0}^\infty \frac{2^kt^k}{k!} + \sum_{r=0}^\infty\frac{3^kt^k}{k!} + \cdots + \sum_{k=0}^\infty\frac{n^kt^k}{k!}\right)\\
= \frac{1}{n-1}\sum_{k=0}^\infty \left(2^k+3^k+\cdots n^k\right)\frac{t^k}{k!}
\end{multline*}
so
\[\mu'_k(s) = \frac{2^k+3^k+\cdots n^k}{n-1}.\]
This formula agrees with our direct calculation of $\mu'_1(s)$, a.k.a. $EV(s)$, and likewise gives a formula for variance (recall $Var(s) = EV(s-EV(s))^2 = \mu_2'-(\mu_1')^2$) in agreement with the standard formula for a discrete uniform distribution.
\end{proof}

\section{The number of components of a typical link}\label{sect:numb_comp}

We next consider random links. These model the behavior of self-assembling polymers, which are not guaranteed to form a single chain when left to their own devices. We first consider the number of components $\#$ of such a link in an $n\times n$ grid diagram.

We use the second combinatorial encoding of Section~\ref{sect:comb} of generating two $n$-permutations to give the $y$-coordinates for the black and white dots and removing any impossible grid diagrams (i.e., pairs of permutations with collisions). We count the number of valid link diagrams $c_n$ and the number with $k$ components $c_{n,k}$. There are na\"ively $n!n!$ ways to place the black and white dots in a grid without accounting for collisions, and so we will also make use of a renormalization, $\overline c_n = \frac{c_n}{n!n!}$ and $\overline c_{n,k} = \frac{c_{n,k}}{n!n!}$.

Note that this is related to but not the same as asking for the number of cycles of length $k$ in an $n$-permutation; following the first model from Section~\ref{sect:comb}, we could list a permutation for the black dots and count the number of cycles of any given length, but this does not give a one-to-one identification between cycles and grid diagrams: a grid diagram also requires a choice of permutation with matching cycle sizes for the white dots; permuting the order of the disjoint cycles may alter the diagram but will not alter the permutation; and we forbid cycles of length 1 (though we resolve this conflict by using a derangement rather than a permutation). This is an instructive comparison, however, because initial behavior of these invariants does resemble that of random permutations, and so we do contrast our results with those for permutations.

\subsection{Numerical simulation}

\begin{table}
\[\small\begin{array}{c | r r r | r | l}
\hline\hline
n & c_{n,1} & c_{n,2} & c_{n,3} & c_n & \multicolumn{1}{c}{\overline{k}} \\
\hline
2 & 2 & & & 2 &1\\
3 & 12 & & & 12 & 1 \\
4 & 144 & 72 & & 216 & 1.3333\\
5 & 2,880 & 2,400 & & 5,280 & 1.4545\\
6 & 86,400 & 93,600 & 10,800 & 190,800 & 1.6038\\
7 & 3,628,800 & 4,656,960 & 1,058,400 & 9,344,160 & 1.7249\\
\hline\hline
\end{array}\]
\caption{}\label{table:count_comps}
\end{table}

To explore the number of components $k$, we generate all possible grid diagrams up to $n=7$, where computation becomes prohibitive. We count the number of knots and 2- and 3-component links ($c_{n,1}$, $c_{n,2}$, and $c_{n,3}$, respectively), and we calculate the average number of components $\overline{k}$ in Table~\ref{table:count_comps}.

\begin{figure}
\begin{subfigure}[b]{0.45\textwidth}
\includegraphics[width=\textwidth]{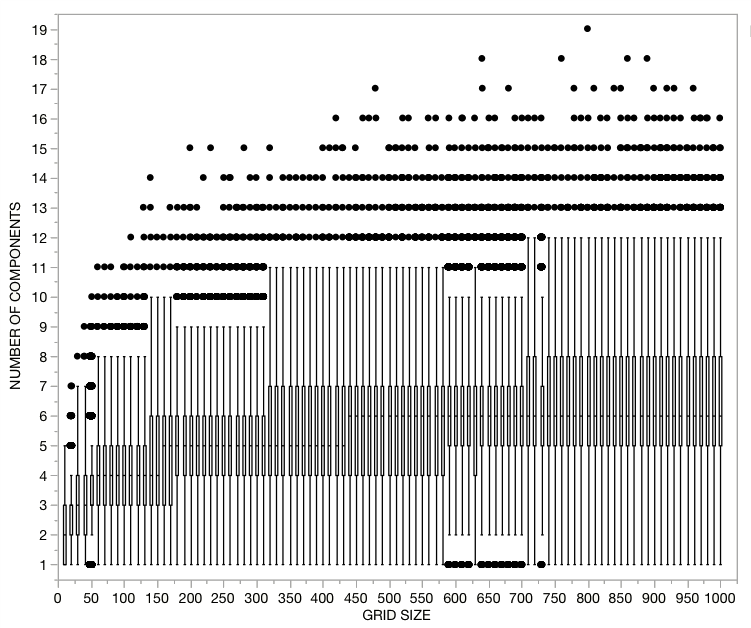}
\caption{\raggedright }\label{fig:count_comps}
\end{subfigure}
\begin{subfigure}[b]{0.45\textwidth}
\includegraphics[width=\textwidth]{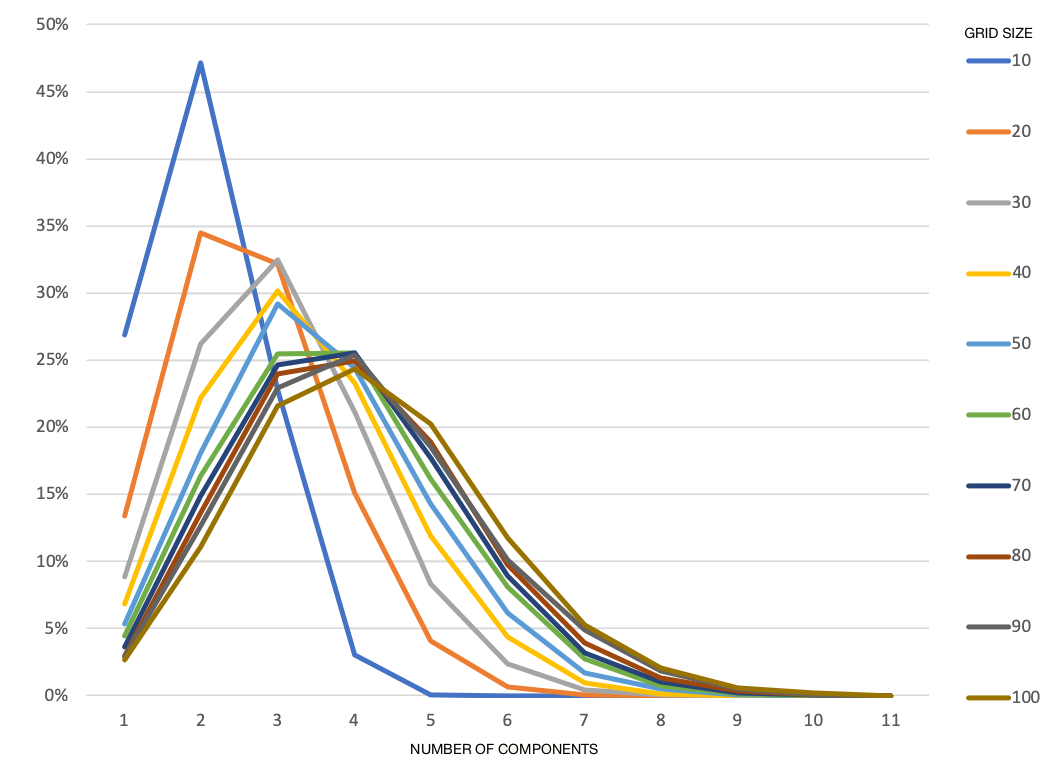}
\caption{\raggedright }\label{fig:count_comps_2}
\end{subfigure}
\caption{\raggedright The number of components in a randomly drawn $n \times n$ link grid diagram: (\subref{fig:count_comps}) 
box plots for $n\leq 1000$; (\subref{fig:count_comps_2}) sample mass functions for $n \leq 100$.}\label{fig:count_comps}
\end{figure}

To explore component size for larger $n$, we generate 10,000 grid diagrams for each size $n=10, 20, \cdots, 1000$ and count the number of components of each. Each grid diagram comes from two random permutations (generated by the default\_random\_engine in c++14 seeded with clock time) of the numbers $1$ through $n$, after eliminating ones which generated collisions between the black and white dots. The number of components is distributed as seen in Figure~\ref{fig:count_comps}, with sample mean and variance:
\[\begin{split}
\overline{k} &= -0.330551 + 0.6831938\log_2 n\\
\sigma^2 &= -1.806079 + 0.6650469\log_2 n
\end{split}\]
with $p < 0.0001$ for both. We have selected the binary logarithm in part informed by Theorem~\ref{thm:cnk} and other results below highlighting similarities between the behavior of the $c_{n,k}$ and the Harmonic numbers, which obey a classic bound in terms of $\log_2(n)$. Selecting the natural log results in very similar models. 

\begin{figure}
\begin{subfigure}[b]{0.45\textwidth}
\includegraphics[width=\textwidth]{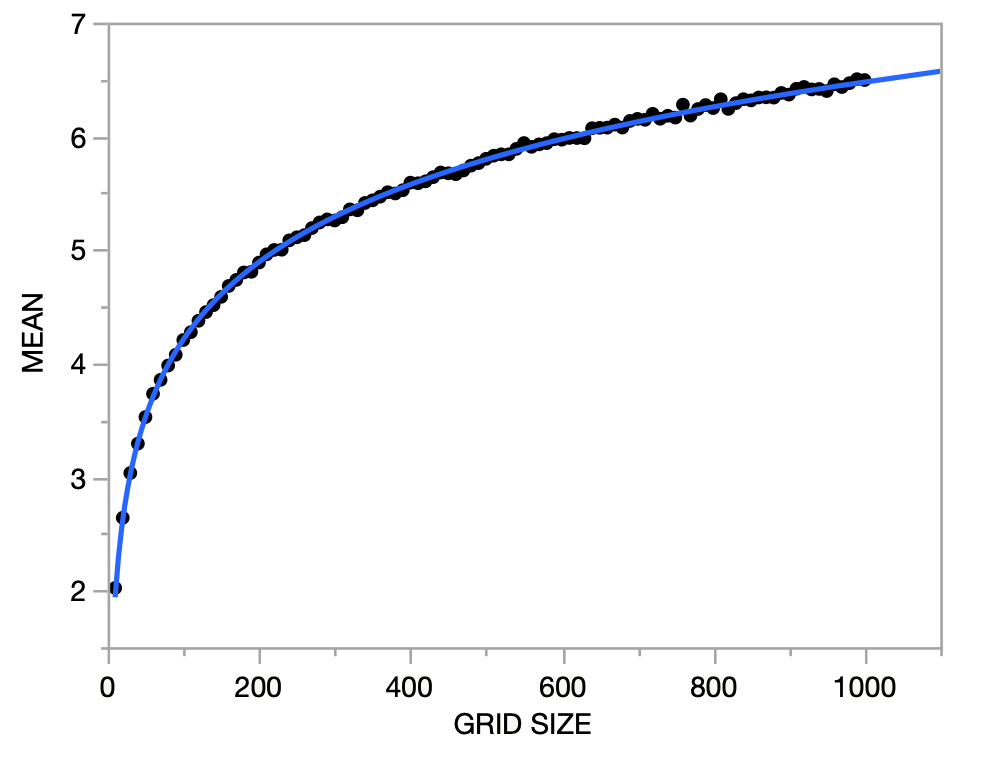}
\caption{\raggedright }\label{fig:mean_by_grid_size}
\end{subfigure}
\begin{subfigure}[b]{0.45\textwidth}
\includegraphics[width=\textwidth]{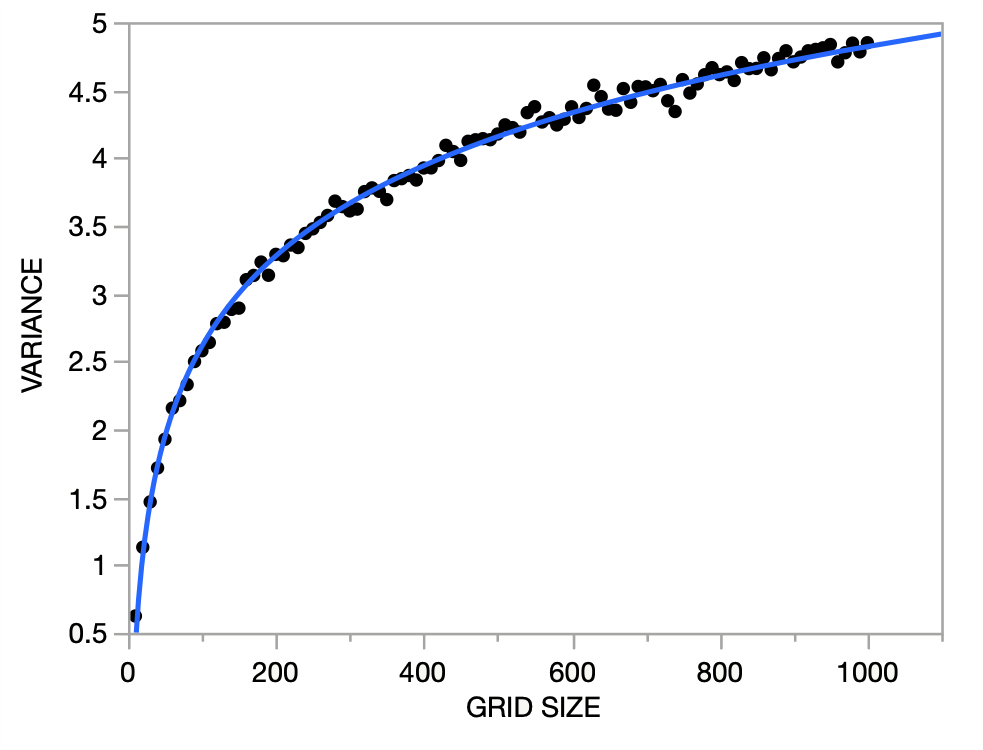}
\caption{\raggedright }\label{fig:var_by_grid_size}
\end{subfigure}
\caption{\raggedright The fit of (\subref{fig:mean_by_grid_size}) mean and (\subref{fig:var_by_grid_size}) variance to grid size.}\label{fig:count_comps_mean_sd}
\end{figure}

Additionally, we observe that, as the grid size increases, knots become vanishingly rare; in fact, for any fixed $k$, the $k$-component links become vanishingly rare. If $\Pr(k \mid n)$ is the probability that an $n \times n$ grid diagram represents a $k$-component link, then
\[\displaystyle \Pr(k \mid n) \rightarrow 0 \text{ as } n\rightarrow \infty\]

\subsection{Mathematical analysis}

We may also calculate directly how many $n \times n$ grid diagrams have a given number of components. We have moved a number of proofs to Section~\ref{sect:gf} for readability.

We begin by restating a result likely known in the community and elegantly presented by Witte\cite[Theorems~5.1.1-5.1.2]{witte2019link}, the number of knot grid diagrams and of total grid diagrams of any given size. When we introduced knot and link notation in Section~\ref{sect:comb}, we mentioned these results and a justification similar to Witte's, and we here state the results again for later use and show how they follow from our generating functions of Theorem~\ref{thm:gf}, which is both satisfying and demonstrates the manner in which one may use these generating functions.

\begin{prop}\label{thm:cn}
For any $n$, the total number of $n\times n$ grid diagrams for knots is:
\[c_{n,1} = n!(n-1)!\]
The total number of link diagrams is:
\[c_n = n!\cdot !n = n!n! \sum_{i=0}^n(-1)^i\frac{1}{i!}.\]
\end{prop}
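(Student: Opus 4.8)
The plan is to count directly and then recover the same numbers from the generating function machinery of Theorem~\ref{thm:gf}. For the knot count, I would argue combinatorially using the encoding from Section~\ref{sect:comb}: a knot grid diagram is recorded by the pair of permutations $(\rho,\kappa)$ giving the order in which the curve visits rows and columns, starting from a chosen black dot. There are $n!\cdot n!$ such pairs, but each knot diagram is counted exactly $n$ times (once for each black dot that could serve as the starting point), so the number of knot diagrams is $n!\cdot n!/n = n!(n-1)!$. Equivalently, I would normalize by fixing $\rho_1 = 1$, leaving a free $(n-1)$-permutation for the rest of $\rho$ and a free $n$-permutation for $\kappa$, again yielding $(n-1)!\cdot n!$.

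For the link count, I would use the derangement encoding: pick a permutation $\sigma \in S_n$ giving the column coordinate of the black dot in each of the $n$ rows (reading top to bottom), and then a derangement $\tau$ of $\{1,\dots,n\}$ telling us, for the black dot in a given column, which column holds the white dot in that same row — the derangement condition is exactly the no-collision constraint (a black and white dot cannot share a cell). Since there are $n!$ permutations and $!n$ derangements, and these choices are independent and in bijection with grid diagrams, we get $c_n = n!\cdot{}!n$. Substituting the standard formula $!n = n!\sum_{i=0}^n (-1)^i/i!$ gives the stated closed form $n!\,n!\sum_{i=0}^n(-1)^i/i!$. I should be careful to verify that the pair $(\sigma,\tau)$ really does determine the diagram uniquely and conversely that every valid diagram arises this way — that the white-dot columns, row by row, assemble into a genuine permutation (they do, since $\sigma$ and $\tau$ are both bijections, so $\sigma\circ\tau^{-1}$ or the appropriate composite is a permutation) — but this is routine bookkeeping.

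Alternatively, and in the spirit of the surrounding discussion, I would derive both counts from the generating function of Theorem~\ref{thm:gf}: setting the component-tracking variable to $1$ collapses $\sum_k c_{n,k}$ to $c_n$, which should reproduce $n!\cdot{}!n$, and extracting the $k=1$ term (or the leading behavior in that variable) should give $c_{n,1} = n!(n-1)!$. This is the approach the excerpt explicitly advertises as "satisfying and demonstrative," so I would present the direct combinatorial argument first for transparency and then show the generating-function specialization as a consistency check, deferring any heavier manipulation to Section~\ref{sect:gf}.

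The only real subtlety — and hence the main obstacle — is the overcounting/normalization bookkeeping: being precise about the fact that a knot diagram corresponds to exactly $n$ permutation-pairs (no more, no fewer, because all $n$ black dots are distinct positions on a single component), and that the derangement encoding is a genuine bijection rather than merely a surjection or injection. Once the bijections are pinned down, both formulas drop out immediately, and matching $!n$ to its inclusion-exclusion expansion is standard. No delicate estimates or limits are involved here.
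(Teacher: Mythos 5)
Your proposal is correct, but it leads with a different argument than the paper's formal proof. The paper deliberately proves Proposition~\ref{thm:cn} by extracting Taylor coefficients from the generating functions of Theorem~\ref{thm:gf}: it computes $\overline c_n = \frac{1}{n!}\frac{\partial^n}{\partial x^n}\left[(1-x)^{-1}e^{-x}\right]_{x=0} = \sum_{i=0}^n (-1)^i/i!$ via the Leibniz rule, and $\overline c_{n,1} = \frac{1}{n!}\frac{\partial^n}{\partial x^n}\left[-\ln(1-x)-x\right]_{x=0} = \frac1n$, precisely to ``demonstrate the manner in which one may use these generating functions.'' Your primary argument is instead the direct bijective count --- $n!\,n!$ permutation pairs for a knot, each diagram hit exactly $n$ times by cyclic choice of starting black dot, hence $n!(n-1)!$; and the permutation-plus-derangement encoding for links, hence $n!\cdot{}!n$ --- which is exactly the informal justification the paper sketches in Section~\ref{sect:comb} and attributes to Witte, but does not repeat inside the proof environment. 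Both routes are sound, and you correctly flag the only genuine subtleties (that all $n$ cyclic shifts of a permutation pair are distinct, and that the derangement encoding is a true bijection, the derangement condition being equivalent to no black/white collision). What the direct count buys is transparency and independence from Theorem~\ref{thm:gf}; what the paper's route buys is a worked illustration of the coefficient-extraction technique reused in Theorems~\ref{thm:cnk} and~\ref{thm:last_thm}. Since you also propose the generating-function specialization as a consistency check, your write-up effectively subsumes the paper's proof; just be sure that if you present the combinatorial count as primary, you do not then cite Proposition~\ref{thm:cn} circularly when verifying the constant term or low-order coefficients of $g(x)$ and $h(x)$ (no such circularity actually arises, since Theorem~\ref{thm:gf} is proved independently, but the logical order is worth stating).
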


Equivalently, we could say 
\[\overline c_{n,1} = \frac{1}{n} \qquad \text{and} \qquad \overline c_n = \sum_{i=0}^n(-1)^i\frac{1}{i!}.\]
Note that $\{c_n\}$ is sequence A082491 in the On-Line Encyclopedia of Integer Sequences, and $\{c_{n,1}\}$ is equivalent to sequence A010790.\cite{oeis}

\begin{proof}
Recall that a generating function for $\overline c_n$ is some function $g(x) = \sum_{n=0}^\infty \overline c_n x^n$. Then we may recover its value by setting $\overline c_n = \frac{1}{n!}\frac{\partial ^n g}{\partial x^n}(0)$. Therefore, using the generating functions derived in Theorem~\ref{thm:gf},
\begin{multline*}
\overline c_n =\frac{1}{n!}\frac{\partial ^n}{\partial x^n}\left[\frac{1}{1-x}e^{-x}\right]_{x=0} \\ 
= \frac{1}{n!}\sum_{i=0}^n{n \choose i} \frac{\partial^{n-i}}{\partial x^{n-i}}\left[\frac{1}{1-x}\right]_{x=0}\frac{\partial^i}{\partial x^i}\left[e^{-x}\right]_{x=0}\\
= \frac{1}{n!} \sum_{i=0}^n\frac{n!}{i!(n-i)!}\left[\frac{(n-i)!}{(1-x)^{n-i+1}}\right]_{x=0}\left[(-1)^ie^{-x} \right]_{x=0} \\ 
= \sum_{i=0}^n(-1)^{i}\frac{1}{i!}
\end{multline*}
and, if $n>1$, 
\begin{multline*}
\overline c_{n,1} =\frac{1}{n!}\frac{\partial ^n}{\partial x^n}\left[-\ln(1-x)-x\right]_{x=0} \\ 
= \frac{1}{n!} \left[\frac{(n-1)!}{(1-x)^n}\right]_{x=0}
= \frac{1}{n}
\end{multline*}
\end{proof}

We immediately see:

\begin{cor}\label{thm:e}
The total number of $n\times n$ grid diagrams $c_n$ displays asymptotic behavior similar to $n!n!$:
\[\lim_{n\rightarrow \infty} \frac{c_n}{n!n!} = \frac{1}{e}.\]
\end{cor}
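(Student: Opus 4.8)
The plan is to read off the limit directly from the closed form for $\overline c_n$ already established in Proposition~\ref{thm:cn}. That proposition gives
\[
\frac{c_n}{n!\,n!} = \overline c_n = \sum_{i=0}^n (-1)^i \frac{1}{i!},
\]
so the statement reduces to recognizing the right-hand side as the $n$th partial sum of a familiar series.

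First I would recall the Taylor expansion $e^x = \sum_{i=0}^\infty x^i/i!$, valid for all real $x$, and specialize to $x = -1$ to obtain $e^{-1} = \sum_{i=0}^\infty (-1)^i/i!$. Since this is a convergent series, its sequence of partial sums converges to its sum, i.e.
\[
\lim_{n\to\infty} \sum_{i=0}^n (-1)^i \frac{1}{i!} = e^{-1}.
\]
Combining with the displayed identity for $\overline c_n$ yields $\lim_{n\to\infty} c_n/(n!\,n!) = 1/e$, which is exactly the claim.

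If one wants a self-contained argument that does not invoke the Taylor series as a black box, I would instead note that the partial sums form an alternating series with terms $1/i!$ decreasing to $0$, so the remainder after $n$ terms is bounded in absolute value by $1/(n+1)!$; letting $n\to\infty$ forces convergence, and the limit is $1/e$ by the standard series representation of the exponential. Either way, there is no real obstacle here — the work was done in proving Proposition~\ref{thm:cn}, and this corollary is a one-line consequence, included mainly to make explicit the sense in which $c_n$ tracks $n!\,n!$.
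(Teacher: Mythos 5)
Your proposal is correct and matches the paper's (implicit) argument exactly: the paper presents this corollary as an immediate consequence of Proposition~\ref{thm:cn}, since $\overline c_n = \sum_{i=0}^n (-1)^i/i!$ is the $n$th partial sum of the series for $e^{-1}$. Nothing further is needed.
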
\label{thm:e}

It is more difficult to calculate explicitly the number of $k$-component links for some given $k > 1$, as may be seen from Witte's summation formula.\cite[Theorem~5.1.3]{witte2019link}. That said, we may bound the number and verify that they are ever sparser as the diagram enlarges.

\begin{thm}\label{thm:cnk}
For any $n$ and $k$, the number of $n\times n$ grid diagrams representing $k$-component links obeys:
\[c_{n,k} \leq n!(n-1)!\left(\log_2 n\right)^{k-1}.\]
\end{thm}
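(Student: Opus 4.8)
The plan is to set up a recursion for the $c_{n,k}$ that peels off one component at a time, then bound the resulting sum by recognizing a harmonic-type partial sum, and finally invoke the classical estimate $H_n \le 1 + \log_2 n$ (or rather a version of it) to get the $(\log_2 n)^{k-1}$ factor. Concretely, a $k$-component link grid diagram can be built by choosing which rows and columns form one distinguished cycle of some length $\ell$ (with $2 \le \ell \le n - 2(k-1)$, since each of the remaining $k-1$ components needs at least two rows), arranging that cycle as a connected sub-grid-diagram, and then placing a $(k-1)$-component link on the remaining $(n-\ell)\times(n-\ell)$ board. The number of connected (knot) grid diagrams on an $\ell$-element row/column set is $\ell!(\ell-1)!$ by Proposition~\ref{thm:cn}, and the number of ways to choose the row set and column set and interleave is controlled by binomial coefficients. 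So I expect a bound of the shape
\[
c_{n,k} \le \sum_{\ell=2}^{n} (\text{choice factors})\cdot \ell!(\ell-1)! \cdot c_{n-\ell,\,k-1},
\]
and the first task is to pin down the ``choice factors'' precisely enough that the $\ell!(\ell-1)!\cdot(n-\ell)!(n-\ell-1)!$ pieces telescope against the $n!(n-1)!$ we want to pull out front.

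The key algebraic step I anticipate: after dividing through by $n!(n-1)!$ and using the inductive hypothesis $c_{n-\ell,k-1} \le (n-\ell)!(n-\ell-1)!(\log_2(n-\ell))^{k-2}$, the binomial and factorial bookkeeping should collapse so that the surviving $\ell$-dependence is essentially $\tfrac{1}{\ell}$ (or $\tfrac{1}{\ell-1}$, or something comparably benign) times $(\log_2(n-\ell))^{k-2} \le (\log_2 n)^{k-2}$. Pulling the latter out of the sum leaves $\sum_{\ell} \tfrac{1}{\ell} \le H_n \le 1 + \log_2 n$ (I will likely need the slightly weaker but cleaner $H_n \le \log_2 n$ for $n$ past a small threshold, or absorb the $+1$ by being a touch more careful — the text's own remark about the $c_{n,k}$ resembling harmonic numbers bounded by $\log_2 n$ signals this is the intended mechanism), which supplies the extra factor and advances the induction from $k-1$ to $k$. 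The base case $k=1$ is exactly Proposition~\ref{thm:cn}, $c_{n,1} = n!(n-1)! = n!(n-1)!(\log_2 n)^0$.

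The main obstacle is getting the combinatorial counting of the peeling step right — in particular, being honest about overcounting (the distinguished component is genuinely distinguished, so each link is counted once per component, which only helps, giving a factor $\le \tfrac{1}{k}$ I can discard) and about the constraint that the cycle structure must use disjoint row-sets \emph{and} disjoint column-sets with a compatible pairing, which is where the potential collision issues from Section~\ref{sect:comb} reappear. I would handle this by bounding crudely: rather than counting valid (collision-free) configurations exactly, I count an over-generous superset — choose the $\ell$ rows and $\ell$ columns for the first component in $\binom{n}{\ell}\binom{n}{\ell}$ ways, realize it as any of $\le \ell!(\ell-1)!$ knot patterns, and bound the remainder by $c_{n-\ell,k-1}$ — since an upper bound is all that is claimed. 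The only real care needed is then checking that $\binom{n}{\ell}^2 \ell!(\ell-1)!(n-\ell)!(n-\ell-1)! \le n!(n-1)!\cdot(\text{small})$, i.e. that $\binom{n}{\ell}^2 \le \binom{n}{\ell}\binom{n-1}{\ell-1}\cdot\tfrac{n}{\ell}$-type inequalities go the right way; this is elementary but is the step where an off-by-one in the exponent would sneak in, so I would verify it explicitly before trusting the telescoping.
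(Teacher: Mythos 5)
Your decomposition is sound, and in fact it is exact rather than merely an upper bound: distinguishing one component of size $\ell$ gives
\[
k\,c_{n,k} \;=\; \sum_{\ell=2}^{n-2}\binom{n}{\ell}^{2}\,\ell!\,(\ell-1)!\;c_{n-\ell,\,k-1},
\]
and since components living on disjoint row- and column-sets cannot collide, the collision worry is moot and the factorial bookkeeping collapses cleanly to $\overline c_{n,k} = \frac{1}{k}\sum_{\ell=2}^{n-2}\frac{1}{\ell}\,\overline c_{n-\ell,k-1}$ in the normalized variables (one can check this reproduces $c_{4,2}=72$ and $c_{6,2}=93{,}600$ from Table~\ref{table:count_comps}). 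This is a genuinely different recursion from the paper's, which differentiates the fixed-$k$ generating function $h(x)$ of Theorem~\ref{thm:gf} once in $x$ to obtain the asymmetric relation $\overline c_{n,k} = \frac{1}{n}\sum_{i=1}^{n-2}\overline c_{i,k-1}$ and then needs only the single harmonic bound $H_{n-1}\le \log_2 n$.

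The gap is exactly the step you flagged as harmless: discarding the overcounting factor $1/k$. After normalization the surviving $\ell$-dependence is not $\frac{1}{\ell}$ but $\frac{n}{\ell(n-\ell)} = \frac{1}{\ell}+\frac{1}{n-\ell}$, so your sum is $2(H_{n-2}-1)$, \emph{twice} a harmonic number. Since $2(H_{n-2}-1)\sim 2\ln n$ while $\log_2 n = (\ln n)/(\ln 2)\approx 1.44\ln n$, the inequality $2(H_{n-2}-1)\le\log_2 n$ fails for all $n\gtrsim 5$ (at $n=100$ the left side is about $8.3$ against $6.6$), so the induction does not close at the claimed constant; without further care you would only prove a bound of the shape $(2\ln n)^{k-1}$. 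The fix is simply to keep the $1/k$: for $k\ge 2$ one has $\frac{2}{k}(H_{n-2}-1)\le H_{n-2}-1\le H_{n-1}\le\log_2 n$, which restores the inductive step (equivalently, the $1/k!$ accumulated over the induction absorbs the stray $2^{k-1}$). With that single correction your argument is a valid and combinatorially transparent alternative to the paper's generating-function derivation.
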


This generates a loose variation of the traditional Frisch-Wasserman-Delbr\"uck Conjecture \cite{delbruck1962knotting, frisch1961chemical}, which in its original form said that the unknot grows vanishingly rare among all knots as their size grows. The conjecture was first proven for polygonal knots\cite{Sumners_1988,pippenger1989knots} and later for a number of other knot models, most relevantly by Witte\cite{witte2019link}, who showed it in the grid diagram model for any given knot type.
\begin{cor}\label{cor:fwdconj}
For any fixed $k$, $k$-component links become vanishingly rare as the grid size $n$ grows,
\[\Pr(k \mid n) \rightarrow 0 \text{ as } n\rightarrow \infty.\]
Indeed, any given link type $[L]$ is vanishingly rare among all links as $n$ grows. 
\[\Pr \left( [L] \mid n \right) \rightarrow 0 \text{ as } n\rightarrow \infty.\]
\end{cor}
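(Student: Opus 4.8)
The plan is to obtain both limits directly from the counting bound of Theorem~\ref{thm:cnk} together with the exact count $c_n = n!\cdot{!n}$ from Proposition~\ref{thm:cn} (and its asymptotic refinement, Corollary~\ref{thm:e}). Since a uniformly random $n\times n$ grid diagram represents a $k$-component link with probability $\Pr(k\mid n) = c_{n,k}/c_n$, the first assertion is simply the statement that this ratio tends to $0$, and everything reduces to estimating it.

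First I would write
\[
\Pr(k \mid n) = \frac{c_{n,k}}{c_n} \;\leq\; \frac{n!\,(n-1)!\,(\log_2 n)^{k-1}}{n!\cdot{!n}} \;=\; \frac{(\log_2 n)^{k-1}}{n}\cdot\frac{n!}{{!n}}.
\]
By Corollary~\ref{thm:e} the factor $n!/{!n}$ converges to $e$, so in particular it is bounded above by a constant $C$ for all $n$ past some threshold (one may even take $C=3$ for every $n\geq 2$, using the crude bound ${!n}\geq n!/3$). Hence $\Pr(k\mid n)\leq C\,(\log_2 n)^{k-1}/n$ for all large $n$. Since $k$ is fixed, $(\log_2 n)^{k-1}=o(n)$, and the right-hand side — hence $\Pr(k\mid n)$ — goes to $0$. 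That is the whole of the first limit.

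For the second statement, fix a link type $[L]$ and let $k_0$ be its number of components, which is an invariant of $[L]$. Every $n\times n$ grid diagram representing $[L]$ is, in particular, a grid diagram of a $k_0$-component link, so the diagrams of $[L]$ form a subset of those counted by $c_{n,k_0}$. Therefore $\Pr([L]\mid n)\leq\Pr(k_0\mid n)$, and the right-hand side tends to $0$ by the first part; this yields the claim.

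There is no real obstacle here: all of the difficulty has been front-loaded into Theorem~\ref{thm:cnk}, whose proof is deferred to Section~\ref{sect:gf}. The only point needing a moment's care is that the denominator $c_n=n!\cdot{!n}$ must not shrink relative to $n!\,(n-1)!$, and this is immediate because ${!n}$ is comparable to $n!/e$; so $c_{n,k}/c_n$ behaves like $(\log_2 n)^{k-1}/n$ up to a bounded factor, which is plainly $o(1)$.
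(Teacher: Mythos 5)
Your proposal is correct and follows essentially the same route as the paper: both arguments combine the bound of Theorem~\ref{thm:cnk} with the asymptotic $c_n/(n!n!)\rightarrow 1/e$ from Corollary~\ref{thm:e} to conclude $c_{n,k}/c_n = O\left((\log_2 n)^{k-1}/n\right)\rightarrow 0$, and both deduce the statement for a fixed link type $[L]$ by observing that its diagrams form a subset of the $k_0$-component diagrams. The only cosmetic difference is that you work with the unnormalized counts and the ratio $n!/{!n}$, while the paper phrases the same estimate in terms of the normalized quantities $\overline c_{n,k}$ and $\overline c_n$.
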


In general, we may produce a generating function for both $\overline c_n$ and $\overline c_{n,k}$, which is essential in establishing all the other theorems here.

\begin{thm}\label{thm:gf}
$\overline c_n$ has generating function
\[g(x) =(1-x)^{-1}e^{-x}\]
and $\overline c_{n,k}$ has generating function
\[G(x,y) =(1-x)^{-y}e^{-xy}.\]
If we fix $k$ and vary $n$, then $G(x,y)$ simplifies to
\[h(x)=(-1)^k\frac{(\ln(1-x)+x)^k}{k!}\]
On the other hand, if we fix $n$ and vary $k$, it simplifies to
\[f(y) =\sum_{i=0}^n(-1)^i{y+n-i-1 \choose y-1}\frac{y^i}{i!}.\]
\end{thm}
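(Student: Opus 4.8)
The plan is to reduce the theorem to a classical enumeration of derangements by cycle count, and then to read off each of the four generating functions by an elementary power-series expansion.

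\emph{Step 1: reduction to derangement statistics.} Recall from Section~\ref{sect:comb} that an $n\times n$ grid diagram is encoded by a permutation $\sigma\in S_n$ (the columns of the black dots, read top to bottom) together with a derangement $D$ of $\{1,\dots,n\}$ (the column of the white dot in each row, as a function of the column of the black dot in that row); this is the encoding behind the count $c_n=n!\cdot{!n}$ of Proposition~\ref{thm:cn}. I would first verify, by tracing the link (follow each vertical arc from its black to its white dot, then the horizontal arc from that white dot to the next black dot), that the components of the link are in bijection with the cycles of $D$ (equivalently of $D^{-1}$ — the cycle count is the same). Consequently, writing $d_{n,k}$ for the number of derangements of $\{1,\dots,n\}$ with exactly $k$ cycles,
\[
c_{n,k}=n!\,d_{n,k},\qquad \overline{c}_{n,k}=\frac{d_{n,k}}{n!},\qquad \overline{c}_n=\frac{{!n}}{n!}.
\]
I would sanity-check this against Table~\ref{table:count_comps}; for instance $d_{6,2}=130$ gives $c_{6,2}=720\cdot130=93{,}600$.

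\emph{Step 2: the bivariate generating function.} Via the exponential formula, a single cycle on $\ell\geq2$ points has exponential generating function $\sum_{\ell\geq2}x^\ell/\ell=-\ln(1-x)-x$, so marking each cycle of a derangement by a variable $y$ and exponentiating gives
\[
G(x,y)=\sum_{n,k}\overline{c}_{n,k}\,x^n y^k=\exp\!\bigl(-y(\ln(1-x)+x)\bigr)=(1-x)^{-y}e^{-xy}.
\]
If one prefers to avoid the exponential formula, the same identity follows from the recurrence $n\,\overline{c}_{n,k}=\sum_{m=0}^{n-2}\overline{c}_{m,k-1}$, obtained by splitting off the cycle through the point $n$, which translates into the relation $H_k'(x)=\tfrac{x}{1-x}H_{k-1}(x)$ for $H_k(x)=\sum_n\overline{c}_{n,k}x^n$ and is solved inductively from $H_0=1$. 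Setting $y=1$ collapses $G$ to $g(x)=(1-x)^{-1}e^{-x}$, recovering the formula for $\overline{c}_n$ in Proposition~\ref{thm:cn}.

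\emph{Step 3: the two specializations.} Writing $G(x,y)=\exp\!\bigl(y\cdot(-(\ln(1-x)+x))\bigr)$ and expanding the outer exponential in powers of $y$ shows that, for fixed $k$, the series $\sum_n\overline{c}_{n,k}x^n$ is the coefficient of $y^k$, namely $h(x)=(-1)^k(\ln(1-x)+x)^k/k!$. For fixed $n$, I would instead extract $[x^n]G(x,y)$ from the product form using the binomial series $(1-x)^{-y}=\sum_{j\geq0}\binom{y+j-1}{j}x^j$ and $e^{-xy}=\sum_{i\geq0}(-1)^i\tfrac{y^i}{i!}x^i$; their Cauchy product at $x^n$ is $\sum_{i=0}^n(-1)^i\tfrac{y^i}{i!}\binom{y+n-i-1}{n-i}$, and rewriting $\binom{y+n-i-1}{n-i}=\binom{y+n-i-1}{y-1}$ gives precisely $f(y)$.

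\emph{Main obstacle.} The algebra in Steps 2--3 is routine; the real content is Step 1 — verifying cleanly that the number of link components equals the cycle count of $D$, which requires care with the torus identifications and a check that the crossing and orientation conventions are irrelevant to the component count — together with the bookkeeping that correctly forbids length-$1$ cycles (the origin of the $e^{-xy}$ factor) and the handling of the degenerate cases $n=0,1$. I would also confirm that $\overline{c}_{n,k}$ is treated with an \emph{ordinary} generating function in $x$, consistently with the proof of Proposition~\ref{thm:cn}, so that no spurious factorials intrude.
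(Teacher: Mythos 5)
Your proposal is correct, and it reaches the theorem by a genuinely different route than the paper. The paper establishes $G(x,y)$ by a direct count of links with a prescribed multiset of component sizes: it partitions the $n$ rows and the $n$ columns into blocks of sizes given by $(e_2,e_3,\dots)$, uses the count $\ell!(\ell-1)!$ of knot diagrams on an $\ell\times\ell$ subgrid for each component, divides by $e_2!e_3!\cdots$ for the indistinguishability of blocks, and then recognizes the resulting sum over $2e_2+3e_3+\cdots=n$ (and $e_2+e_3+\cdots=k$) as the coefficient extraction from the product $\prod_{\ell\geq2}\exp(x^\ell y/\ell)$. You instead exploit the permutation-times-derangement encoding of Section~\ref{sect:comb}, identify link components with cycles of the derangement $D$ (equivalently $D^{-1}$), and invoke the classical exponential formula for derangements refined by cycle count; both routes funnel through the same series $-\ln(1-x)-x=\sum_{\ell\geq2}x^\ell/\ell$. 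The specializations also differ in technique: the paper obtains $f(y)$ and $h(x)$ by $n$-fold and $k$-fold differentiation of $G$ (via the Leibniz rule, with $(y-1)!$-style factorial manipulations), whereas you read off $h$ from the Taylor expansion of $\exp(yu)$ in $y$ and $f$ from the Cauchy product of the binomial series for $(1-x)^{-y}$ with the exponential series for $e^{-xy}$, followed by the rewrite $\binom{y+n-i-1}{n-i}=\binom{y+n-i-1}{y-1}$. Your reduction buys a cleaner combinatorial core that is checkable against Table~\ref{table:count_comps} (e.g.\ $d_{6,2}=130$ giving $c_{6,2}=93{,}600$) and connects the result to standard permutation enumeration; the paper's count is more self-contained, makes the role of the subgrid knot count $\ell!(\ell-1)!$ explicit, and does not require first verifying the components-equal-cycles bijection, which is the one step of your argument that genuinely needs the careful tracing you flag as the main obstacle. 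Your auxiliary recurrence $n\,\overline{c}_{n,k}=\sum_{m=0}^{n-2}\overline{c}_{m,k-1}$ is, incidentally, exactly the identity the paper later derives analytically in its proof of Theorem~\ref{thm:cnk}, so your alternative derivation of $h(x)$ from it is fully consistent with the rest of the paper.
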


At last, we verify our observation of mean and variance.
\begin{thm}\label{thm:last_thm}
In an $n\times n$ grid, the expected value of the number of components $k$ is
\[EV(k) = \frac{1}{\overline c_n}\sum_{i=0}^{n-1}(-1)^i\frac{H_{n-i}-1}{i!}\]
and
\[Var(k) =\frac{1}{\overline c_n}\sum_{i=0}^{n}(-1)^i\frac{H^2_{n-i}-H_{n-i,2} + (2i-1)H_{n-i} + i(i-2)}{i!}\]
\end{thm}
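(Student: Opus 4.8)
The plan is to extract $EV(k)$ and $Var(k)$ from the generating function $f(y) = \sum_{i=0}^n(-1)^i\binom{y+n-i-1}{y-1}\frac{y^i}{i!}$ of Theorem~\ref{thm:gf}, treating $f(y)$ as (essentially) the probability generating function in the variable $y$ that records the number of components for fixed $n$. Since $f(1) = \overline c_n$ (the normalized total count), the moments come from differentiating in $y$ and evaluating at $y=1$: specifically $EV(k) = f'(1)/f(1)$ and $EV(k(k-1)) = f''(1)/f(1)$, from which $Var(k) = \frac{f''(1)+f'(1)}{f(1)} - \left(\frac{f'(1)}{f(1)}\right)^2$. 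So the whole theorem reduces to computing $f'(1)$ and $f''(1)$ term by term.

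The first key step is to differentiate the single term $T_i(y) = \binom{y+n-i-1}{y-1}\frac{y^i}{i!}$. It is cleanest to write the binomial coefficient as $\binom{y+n-i-1}{n-i} = \frac{(y+n-i-1)(y+n-i-2)\cdots(y)}{(n-i)!} = \prod_{j=0}^{n-i-1}\frac{y+j}{(n-i)!}$ (noting the bottom index $y-1$ in the theorem is the same thing, and when $i=n$ this product is empty and equals $1$). Then logarithmic differentiation is natural: writing $T_i(y) = \frac{y^i}{i!}\cdot\frac{1}{(n-i)!}\prod_{j=0}^{n-i-1}(y+j)$, we get $\frac{T_i'}{T_i} = \frac{i}{y} + \sum_{j=0}^{n-i-1}\frac{1}{y+j}$, which at $y=1$ becomes $i + \sum_{j=0}^{n-i-1}\frac{1}{j+1} = i + H_{n-i}$. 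Since $T_i(1) = \frac{1}{i!}$, this gives $T_i'(1) = \frac{i + H_{n-i}}{i!}$, and summing $(-1)^i T_i'(1)$ against $\frac{1}{\overline c_n}$ — after absorbing the $i$ term by reindexing or by the identity $\sum(-1)^i i/i! = -\sum(-1)^i/i!$, which shifts $H_{n-i}$ to $H_{n-i}-1$ — should produce exactly the stated formula $EV(k) = \frac{1}{\overline c_n}\sum_{i=0}^{n-1}(-1)^i\frac{H_{n-i}-1}{i!}$ (the top limit drops to $n-1$ because the $i=n$ term contributes $H_0 - 1 = -1$ plus a compensating piece that cancels, or simply because $H_0=0$).

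For the variance I would compute $T_i''(1)$ using $T_i'' = T_i\left[\left(\frac{T_i'}{T_i}\right)^2 + \left(\frac{T_i'}{T_i}\right)'\right]$. The derivative of the logarithmic derivative at $y=1$ is $-\frac{i}{1} - \sum_{j=0}^{n-i-1}\frac{1}{(j+1)^2} = -i - H_{n-i,2}$ (using the paper's notation $H_{n,2}$ for the second-order harmonic number). Hence $T_i''(1) = \frac{1}{i!}\left[(i+H_{n-i})^2 - i - H_{n-i,2}\right]$. Adding $T_i'(1) = \frac{i+H_{n-i}}{i!}$ and expanding $(i+H_{n-i})^2 = i^2 + 2iH_{n-i} + H_{n-i}^2$, the bracket becomes $H_{n-i}^2 - H_{n-i,2} + (2i+1)H_{n-i} + i^2$. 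Then $Var(k) = \frac{1}{\overline c_n}\sum_{i=0}^n (-1)^i\frac{1}{i!}\left[H_{n-i}^2 - H_{n-i,2} + (2i+1)H_{n-i} + i^2\right] - (EV(k))^2$. The remaining work is the bookkeeping that turns this into the stated closed form $Var(k) = \frac{1}{\overline c_n}\sum_{i=0}^n(-1)^i\frac{H_{n-i}^2 - H_{n-i,2} + (2i-1)H_{n-i} + i(i-2)}{i!}$: one must show that subtracting the square of the mean contributes exactly the correction $-2H_{n-i} - 2i + \text{(constant)}$ inside the sum, i.e.\ it converts $(2i+1) \mapsto (2i-1)$ and $i^2 \mapsto i^2-2i = i(i-2)$. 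This is the delicate point, because $(EV(k))^2$ is not a single alternating sum but a product of two, so reconciling it with the given single-sum formula requires an identity relating $\left(\sum(-1)^i\frac{H_{n-i}-1}{i!}\right)^2$ and a linear-in-$H_{n-i}$ alternating sum.

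The step I expect to be the main obstacle is exactly that last reconciliation: proving that the ``$-(EV(k))^2$'' term collapses, after using $\sum_{i=0}^n \frac{(-1)^i}{i!}(\cdots) = \overline c_n \cdot(\cdots)$-type manipulations and Vandermonde/Cauchy-product identities for alternating sums with harmonic-number weights, into the claimed adjustment of the coefficients. Likely this hinges on a convolution identity of the form $\sum_{i+j=m}\frac{(-1)^i}{i!}\cdot\frac{(-1)^j}{j!}(H_{n-i}-1) = \frac{(-1)^m}{m!}\cdot 2^m(\text{something})$ or on recognizing $\overline c_n$ as $f(1)$ and writing $(f'(1))^2 = f(1)\cdot(\text{linear combination of } f'(1) \text{ and lower terms})$ via a differential relation satisfied by $G(x,y) = (1-x)^{-y}e^{-xy}$ — for instance, $\partial_y G = (-\ln(1-x) - x) G$, which at the level of coefficients gives a recursion linking the first moment back to $\overline c_n$ and could be iterated for the second. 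If a clean identity is elusive, an alternative is to verify the formula against Proposition~\ref{thm:cn} and Table~\ref{table:count_comps} for small $n$ and then argue by the uniqueness of the power-series coefficients, but I would first push hard on the direct algebraic route since the structure of $f(y)$ makes the termwise differentiation so transparent.
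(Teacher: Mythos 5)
Your computation of $EV(k)$ is correct and coincides with the paper's own argument: both differentiate $f(y)$ termwise at $y=1$ (you by logarithmic differentiation, the paper by an explicit product-rule evaluation of $\frac{\partial}{\partial y}\binom{y+n-i-1}{y-1}$ at $y=1$), arriving at the same $T_i'(1)=\frac{i+H_{n-i}}{i!}$, and the reindexing $\sum_i(-1)^i\frac{i}{i!}=-\sum_j(-1)^j\frac{1}{j!}$ is exactly how the paper converts $H_{n-i}+i$ into $H_{n-i}-1$ with the upper limit dropping to $n-1$. Your second-derivative computation also agrees with the paper's, which obtains $\frac{1}{i!}\left(H_{n-i}^2-H_{n-i,2}+2iH_{n-i}+i(i-1)\right)$ for the $i$th term of $P''(1)$.

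The obstacle you isolate at the end, however, is not bookkeeping to be pushed through: it is an error in the theorem. Your identity $Var(k)=\frac{f''(1)+f'(1)}{f(1)}-\left(\frac{f'(1)}{f(1)}\right)^2$ is the correct one, whereas the paper asserts $Var=P''(1)-P'(1)$ for $P=f/\overline c_n$, i.e., it uses $-P'(1)$ where the correct correction is $+P'(1)-\left(P'(1)\right)^2$. The displayed formula for $Var(k)$ is precisely $P''(1)-P'(1)$ (subtracting $\frac{2(H_{n-i}+i)}{i!}$ termwise from your $f''(1)+f'(1)$ expression subtracts $2\,EV(k)$ in total, which is what turns $(2i+1)\mapsto(2i-1)$ and $i^2\mapsto i(i-2)$), so it equals the true variance only when $EV(k)$ is $0$ or $2$; the convolution identity you hope for would force $\left(EV(k)\right)^2=2\,EV(k)$ and does not exist. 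You can confirm the statement is false from Table~\ref{table:count_comps}: for $n=4$ one has $p_1=2/3$, $p_2=1/3$, hence $EV(k)=4/3$ and $Var(k)=2-\frac{16}{9}=\frac{2}{9}$, while the stated sum evaluates to $\frac{1}{\overline c_4}\cdot\left(-\frac{1}{4}\right)=\frac{8}{3}\cdot\left(-\frac{1}{4}\right)=-\frac{2}{3}$, which is negative and cannot be a variance. The correct conclusion of your argument is
\[Var(k)=\frac{1}{\overline c_n}\sum_{i=0}^{n}(-1)^i\frac{H^2_{n-i}-H_{n-i,2}+(2i+1)H_{n-i}+i^2}{i!}-\left(EV(k)\right)^2,\]
and the $-\left(EV(k)\right)^2$ term, being the square of an alternating sum, cannot be folded into a single sum of the same shape. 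Stop there rather than trying to reproduce the stated closed form.
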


Here the $H_{n,m} = \sum_{i=1}^n \frac{1}{i^m}$ are the generalized Harmonic numbers and $H_n = H_{n,1}$ the pure Harmonic numbers.

\subsection{Other link models}

Theorem~\ref{thm:cnk} is reminiscent of Ma\cite{ma}, who studied the random braid model, where a link $w_{n,k}$ is formed by closing up a random walk of $k$ steps on the braid group $\mathcal{B}^n$. Ma found that, if $n$ is fixed and $k$ allowed to pass to infinity, then the expected value of the number of components approaches $H_n$. Just as in our model, $EV$ also grows with $n$, although he does not derive separate results for the prevalence of a given number of components. He produces similar results for the bridge model, where a link $w_{2n,k}$ comes from closing up a random walk of $k$ steps in the mapping class group $\mathcal{M}_{0,2n}$ via an $n$-bridge presentation.

\section{Writhe $w$}\label{sect:writhe}

We finally approach \emph{writhe}, which is algebraic count of the crossings (i.e., the number of positive crossings minus the number of negative crossings - see below). 
\[
\xymatrix{
&& \\
\ar[ur] & \ar[ul]  |\hole_>>>>{+}
}
\qquad
\xymatrix{
&& \\
\ar[ur] |\hole & \ar[ul]_>>>>{-}
}
\]
The writhe $w$ of a knot projected into the plane is dependent upon its projection, or its diagram; however, if the knot is framed or is presented as a pair of adjacent strands (as in the case of DNA), then a change to writhe is reflected in a change of framing or of the twist of the pair of strands.

\subsection{Numerical simulation} 

We check the writhe of a random $n\times n$ knot diagram. Observe that Witte\cite{witte2019link} also performed a numerical simulation of writhe in grid diagrams, but he explored the distribution of writhe for a random diagram representing a fixed knot type, with obviously very different results.

The distribution of writhe for a random knot should be symmetric around 0 (every diagram has an equally likely mirror diagram where all crossings are changed; for grid diagrams, reflecting right-to-left is equivalent to changing all crossings), and the expected value for all the grid diagrams of any fixed size $n\times n$ is:
\[EV(w) = 0.\]
Similarly, skewness and all other odd higher moments will be 0, and there will be no difference between central and non-central moments.

\begin{figure}
\begin{subfigure}[b]{0.45\textwidth}
\includegraphics[width=\textwidth]{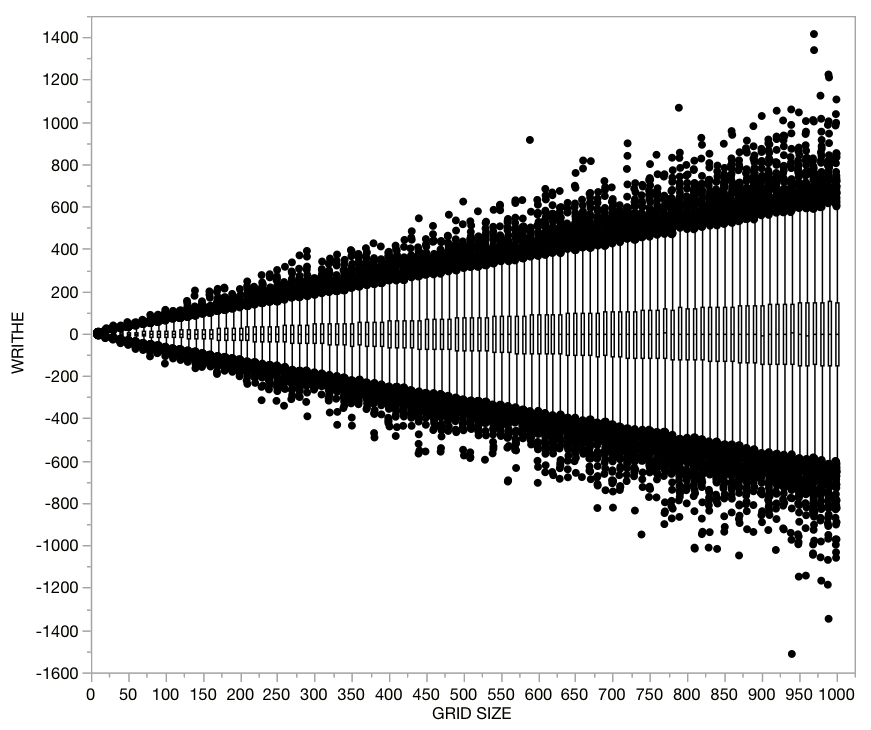}\caption{\raggedright }\label{fig:writhe_by_grid_size_1}
\end{subfigure}
\begin{subfigure}[b]{0.45\textwidth}
\includegraphics[width=\textwidth]{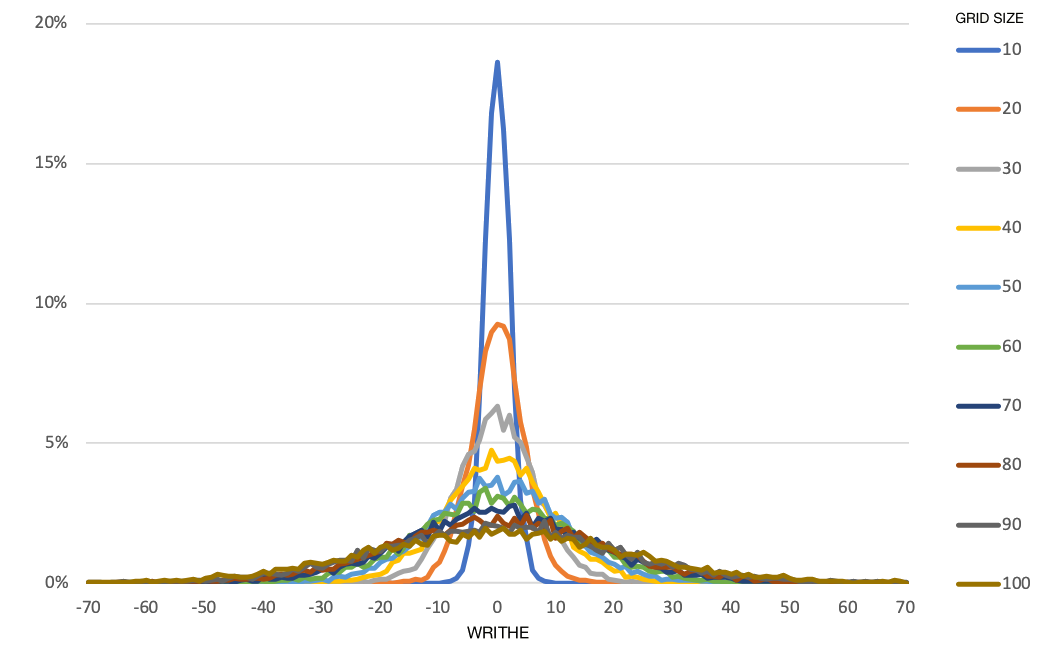}\caption{\raggedright }\label{fig:writhe_by_grid_size_2}
\end{subfigure}
\caption{\raggedright Writhe vs grid size for randomly generated $n \times n$ knot grid diagrams: (\subref{fig:writhe_by_grid_size_1}) box plots for $n\leq 1000$; (\subref{fig:writhe_by_grid_size_2}) sample mass functions for $n\leq 100$.}\label{fig:writhe_by_grid_size}
\end{figure}

\begin{figure}
\begin{subfigure}[b]{0.45\textwidth}
\includegraphics[width=\textwidth]{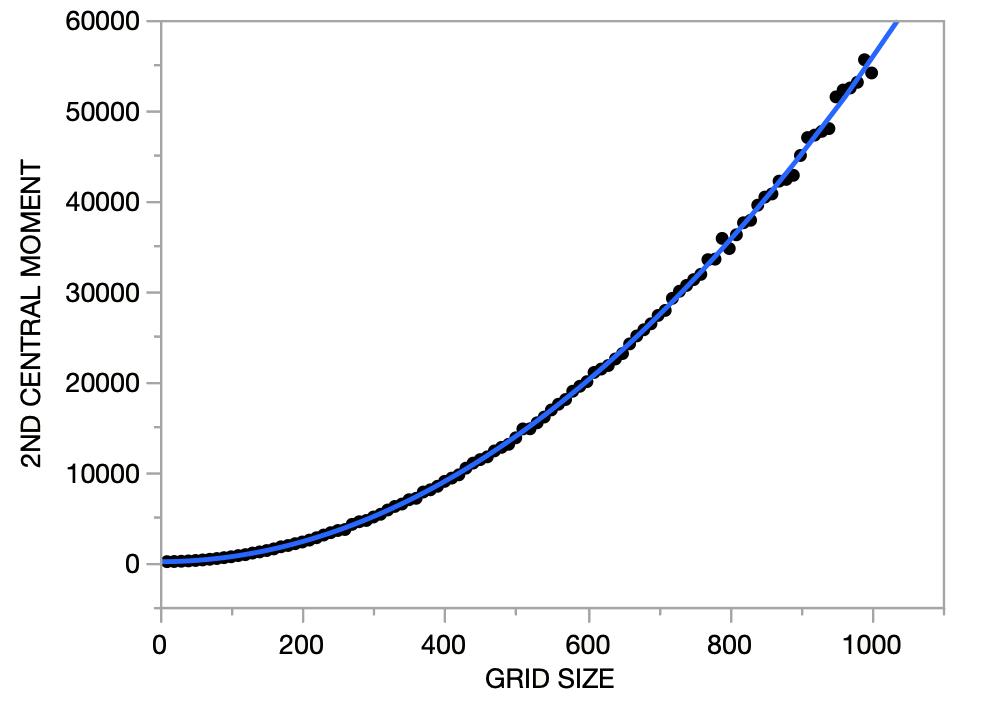}\caption{\raggedright}\label{fig:mu2_by_grid_size}
\end{subfigure}
\begin{subfigure}[b]{0.45\textwidth}
\includegraphics[width=\textwidth]{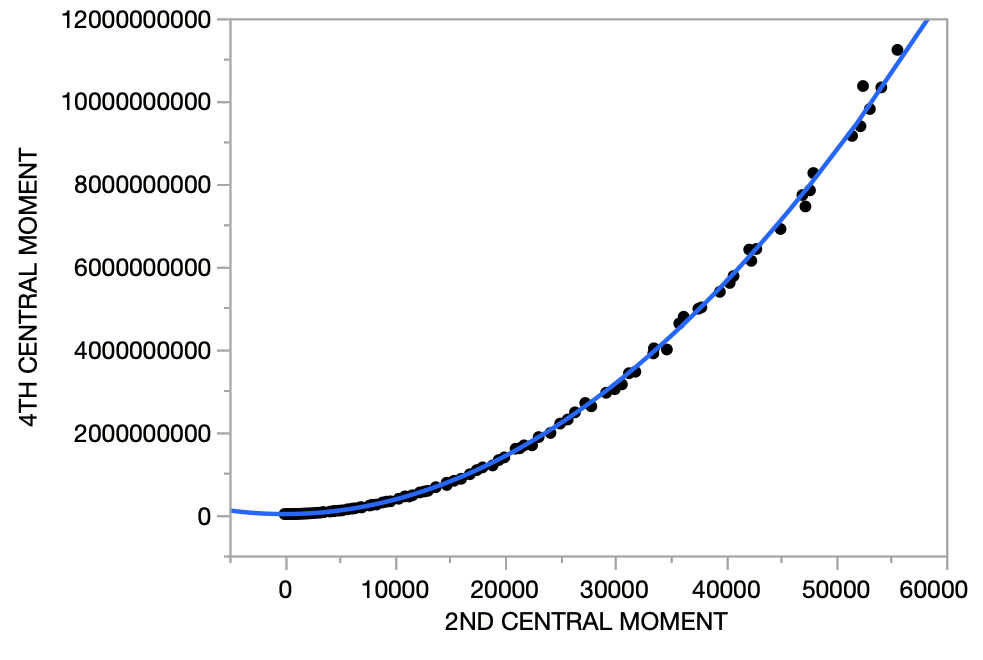}\caption{\raggedright}\label{fig:mu4_by_mu2_by_grid_size}
\end{subfigure}
\caption{\raggedright Writhe vs grid size for randomly generated $n \times n$ knot grid diagrams: (\subref{fig:mu2_by_grid_size}) variance vs. grid size and (\subref{fig:mu4_by_mu2_by_grid_size}) fourth moment $m_4$ vs. variance for $n\leq 1000$.}\label{fig:writhe_by_grid_size2}
\end{figure}

We consider the second description from Section~\ref{sect:comb}: a knot is two $n$-permutations which read off the order in which the columns and rows and encountered. First, we ran 10,000 cases for each of $n = 10, 20, \cdots, 1000$ and compared writhe to grid size; as shown in Figure~\ref{fig:writhe_by_grid_size}, the sample means are approximately 0 and the sample standard deviations are proportional to $n$. 

Our sample means ranged from -5.0709 to 3.1659 with an overall average of 
\[\overline{w} = -0.187818\]
and a tight grouping near 0; in fact, 93 of out 100 sample cases had a 95\% confidence interval containing $0$. We next wish to understand higher moments. Figure~\ref{fig:writhe_by_grid_size2} suggests that sample variances are proportional to $n^2$, so we fit $\sigma^2$ to $n^2$ using a linear model and forcing the $y$-intercept to be 0 (leaving the $y$-intercept unconstrained results in a model with almost identical slope and $y$-intercept of -30; it is indistinguishable from our selected model by the naked eye).
\[\sigma^2 = 0.0555327n^2\]
with confidence $p<0.0001$. Similarly, to study sample kurtoses (otherwise known as the fourth moment over the square of the second, which gives a measure of peakedness of the distribution), we fit the fourth sample moments to square of sample variances with a linear model and $y$-intercept constrained to 0:
\[m_4 = 3.5090019\sigma^4,\]
again with $p<0.0001$, so the sample kurtosis is approximately 3.5, independent of $n$. That is, we may reasonably conjecture:

\begin{conj} The writhe of knots in an $n\times n$ grid diagram follows a distribution with trivial expected value and skewness and with variance and kurtosis given by:
\[Var(w) = \frac{1}{18}n^2 \qquad and \qquad \kappa(w) = 3.5.\]
\end{conj}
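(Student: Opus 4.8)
\emph{A proposed line of attack.} The plan is to compute the even moments of $w$ directly in the combinatorial model of Section~\ref{sect:comb}: a uniformly random $n\times n$ knot diagram is produced by drawing $\rho,\kappa\in S_n$ independently and uniformly (each diagram arises from exactly $n$ such pairs, all equally likely, and $w$ does not depend on the base point). Writhe is a signed sum over pairs consisting of a vertical arc $V_i$ and a horizontal arc $H_j$: the arc $V_i$ sits in column $\kappa_i$ and spans the rows strictly between $\rho_i$ and $\rho_{i+1}$, while $H_j$ sits in row $\rho_{j+1}$ and spans the columns strictly between $\kappa_j$ and $\kappa_{j+1}$ (indices read cyclically). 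They cross exactly when $\kappa_i$ lies strictly between $\kappa_j$ and $\kappa_{j+1}$ and $\rho_{j+1}$ lies strictly between $\rho_i$ and $\rho_{i+1}$, with sign the product of $\operatorname{sgn}(\rho_{i+1}-\rho_i)$ and $\operatorname{sgn}(\kappa_j-\kappa_{j+1})$ up to a global convention-dependent sign $\varepsilon$, which is irrelevant for the even moments at issue. Setting $\sigma_i(x)=\tfrac12\bigl(\operatorname{sgn}(x-\rho_i)-\operatorname{sgn}(x-\rho_{i+1})\bigr)$ and $\tau_j(y)=\tfrac12\bigl(\operatorname{sgn}(y-\kappa_{j+1})-\operatorname{sgn}(y-\kappa_j)\bigr)$, this packages into
\[ w \;=\; \varepsilon\sum_{i=1}^{n}\sum_{j=1}^{n}\tau_j(\kappa_i)\,\sigma_i(\rho_{j+1}), \]
in which the degenerate terms $j\in\{i-1,i\}$ (shared endpoint) vanish automatically.

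Everything then reduces to moment bookkeeping. Since $\rho\perp\kappa$, each expectation factors into a $\rho$-part and a $\kappa$-part; expanding the $\sigma$'s and $\tau$'s writes each part as a signed count of orderings of a bounded set of permutation values, i.e.\ a rational number depending only on the pattern of coincidences among the finitely many indices involved. The engine forcing cancellation is a parity/symmetry argument: if the index configuration leaves some $\sigma_i$ or $\tau_j$ ``unpinned'' --- roughly, if positions $i,i+1$ are compared an odd number of times and to nothing else --- then transposing the corresponding two permutation values is measure-preserving and negates the term, so it contributes $0$. That $EV(w)$ and all odd moments vanish (hence skewness $0$) needs nothing more than the mirror symmetry already noted in the text: reflecting a grid diagram left--right is a measure-preserving involution reversing the orientation of the plane, so it negates $w$.

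For the variance, $Var(w)=EV(w^2)=\sum_{i,j,k,l}EV\bigl[\tau_j(\kappa_i)\sigma_i(\rho_{j+1})\tau_l(\kappa_k)\sigma_k(\rho_{l+1})\bigr]$, a sum over $[n]^4$. The cancellation argument shows a quadruple contributes only when both $\{i,i+1,k,k+1\}$ (for the $\sigma$'s) and $\{j,j+1,l,l+1\}$ (for the $\tau$'s) are sufficiently pinched, which forces all but at most two of the four indices to be determined; so only $\Theta(n^2)$ quadruples survive, falling into a short list of families (the diagonal $i=k,\,j=l$; the near-diagonals where one index is shifted by $\pm1$; and a few more). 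Each family contributes a fixed small rational times its $\Theta(n^2)$ count: the diagonal alone gives $\tfrac19 n^2$, since a generic crossing slot is occupied with probability $\tfrac13\cdot\tfrac13$ ($\kappa_i$ is the median of three random values with probability $\tfrac13$, and likewise for $\rho_{j+1}$), and the near-diagonal covariances must net $-\tfrac1{18}n^2$, yielding $Var(w)=\tfrac1{18}n^2+O(n)$; carrying the subleading families along would give the exact polynomial.

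The real obstacle is the kurtosis. The same program applied to $EV(w^4)$ runs over $[n]^8$; the cancellation argument should leave $\Theta(n^4)$ surviving configurations, so $m_4=c\,n^4+O(n^3)$ and $\kappa(w)\to c/(1/18)^2=324c$. Matching the empirical $\kappa(w)\approx3.5$ then amounts to proving $c=\tfrac{7}{648}$, i.e.\ $\kappa(w)=\tfrac72$ exactly --- and the fact that this is \emph{not} the Gaussian value $3$ says $w/\sqrt{Var(w)}$ has a non-normal limit, as one expects for a degenerate-quadratic statistic. The difficulty is sheer combinatorial volume: classifying the leading-order index configurations among eight indices and evaluating the associated averages of products of up to sixteen sign functions over symmetric groups of order up to $8$. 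I expect this is doable with a disciplined, computer-assisted case analysis, but it is enough of a grind that the statement is honestly still a conjecture; the clean shortcut, if it exists, would be a generating-function or bijective handle on $\sum_{\text{diagrams}}w^2$ and $\sum_{\text{diagrams}}w^4$ in the spirit of Theorem~\ref{thm:gf}.
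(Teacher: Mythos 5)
You should first note what the paper actually does here: this statement is a \emph{conjecture}, supported only by exploratory data analysis. The paper establishes $EV(w)=0$ and the vanishing of all odd moments by the mirror-symmetry involution (the same argument you give), and then obtains $Var(w)\approx 0.0555\,n^2$ and $\kappa\approx 3.5$ by fitting curves to $10{,}000$ samples at each grid size; the author explicitly states that an analytic treatment ``was resistant to our efforts.'' So there is no paper proof to match against, and your proposal is a genuinely different (and more ambitious) route: a direct moment expansion of $w$ as a signed sum of crossing indicators over vertical--horizontal arc pairs. Your setup is correct for the paper's model: the encoding by independent uniform $\rho,\kappa\in S_n$ with each knot counted $n$ times is exactly the paper's second encoding, the crossing condition and the automatic vanishing of the shared-endpoint terms are right, and the diagonal contribution $\tfrac19 n^2$ to $EV(w^2)$ (from the $\tfrac13\cdot\tfrac13$ median computation) is correct.

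The genuine gap is that the two quantitative claims of the conjecture are never derived. For the variance, the off-diagonal families are not enumerated or evaluated; the assertion that they ``must net $-\tfrac1{18}n^2$'' is reverse-engineered from the empirical slope rather than proved, so even $Var(w)\sim\tfrac1{18}n^2$ remains open under your program (though it is a finite, checkable computation: the surviving quadruples where index sets overlap or are adjacent form $O(1)$ families of size $\Theta(n^2)$, each a fixed expectation over at most $6$ permutation values, so this part is genuinely within reach). For the kurtosis, nothing is computed at all: the constant $c$ in $m_4=c\,n^4+O(n^3)$, and hence $\kappa=324c$, is identified with $\tfrac72$ purely by matching the numerics, which is no more than the paper already provides. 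One further caution: the cancellation heuristic (``unpinned $\sigma_i$ gives zero by transposing two values'') needs care, because transposing $\rho_i$ and $\rho_{i+1}$ negates $\sigma_i$ only where its argument lies between them and also perturbs any other factor involving those values; making that lemma precise is itself a nontrivial step. In short, the framework is sound and honestly labeled as a program, but it does not close the conjecture, and as written it establishes nothing beyond what the paper's own symmetry argument already gives.
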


\begin{figure}
\begin{subfigure}[b]{0.45\textwidth}
\includegraphics[width=\textwidth]{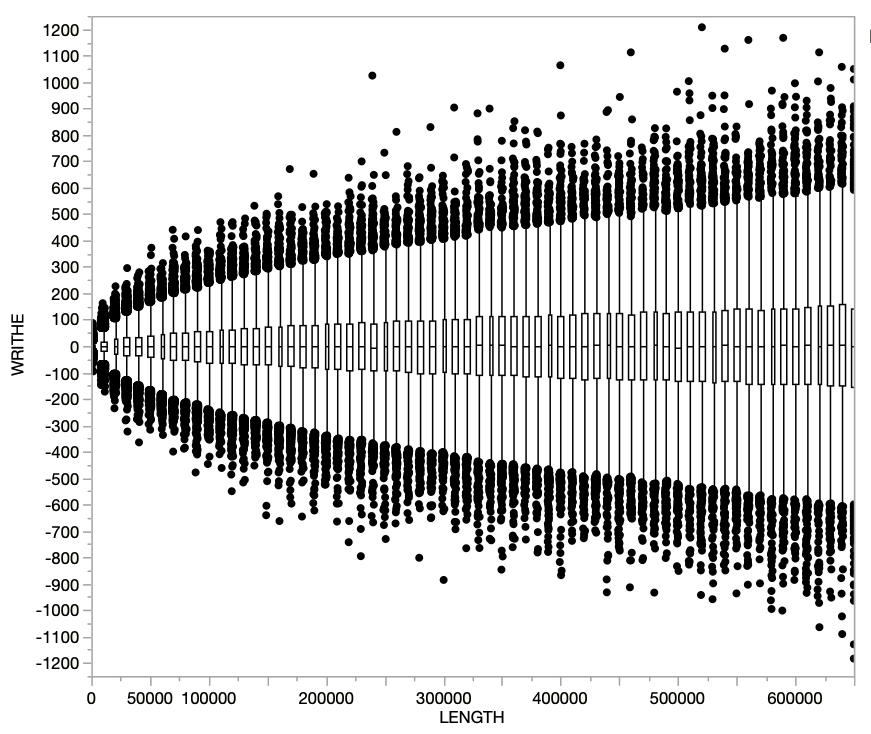}\caption{\raggedright }\label{fig:writhe_by_length_1}
\end{subfigure}
\begin{subfigure}[b]{0.45\textwidth}
\includegraphics[width=\textwidth]{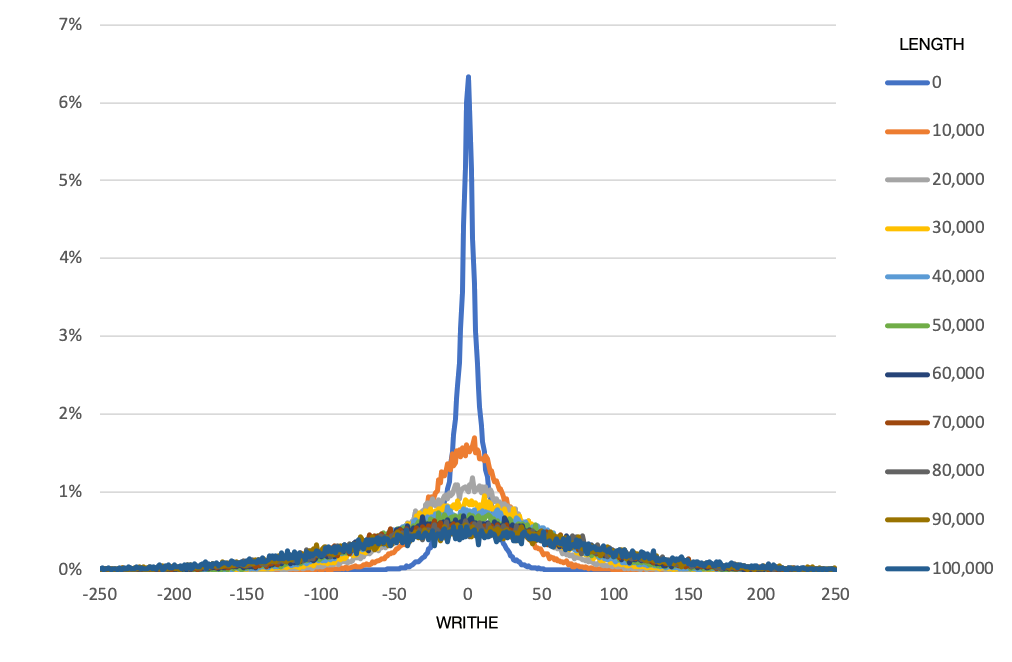}\caption{\raggedright }\label{fig:writhe_by_length_2}
\end{subfigure}
\caption{\raggedright Writhe vs knot length for randomly generated knot grid diagrams: (\subref{fig:writhe_by_length_1}) box plots for $l \leq 650,000$; (\subref{fig:writhe_by_length_2}) sample mass functions for $l \leq100,000$.}\label{fig:writhe_by_length}
\end{figure}

We also calculate the writhe-to-length ratio. We ran 1,000,000 cases randomly selected from $2 \leq n \leq 1000$. The longest knot had length just over 700,000, and knots on the upper end of this range were not well represented; we rounded length to the nearest 10,000 and restricted our set to those of length at most 650,000 (these range in size from 80,000 representatives for $l=0$ just under 7500 representatives for $l=650,000$). By the same reasoning used for $n$, in the general case, knots in grid diagrams come in pairs which are reflections and have opposite writhe, so
\[EV(w) = 0.\]

\begin{figure}
\begin{subfigure}[b]{0.45\textwidth}
\includegraphics[width=\textwidth]{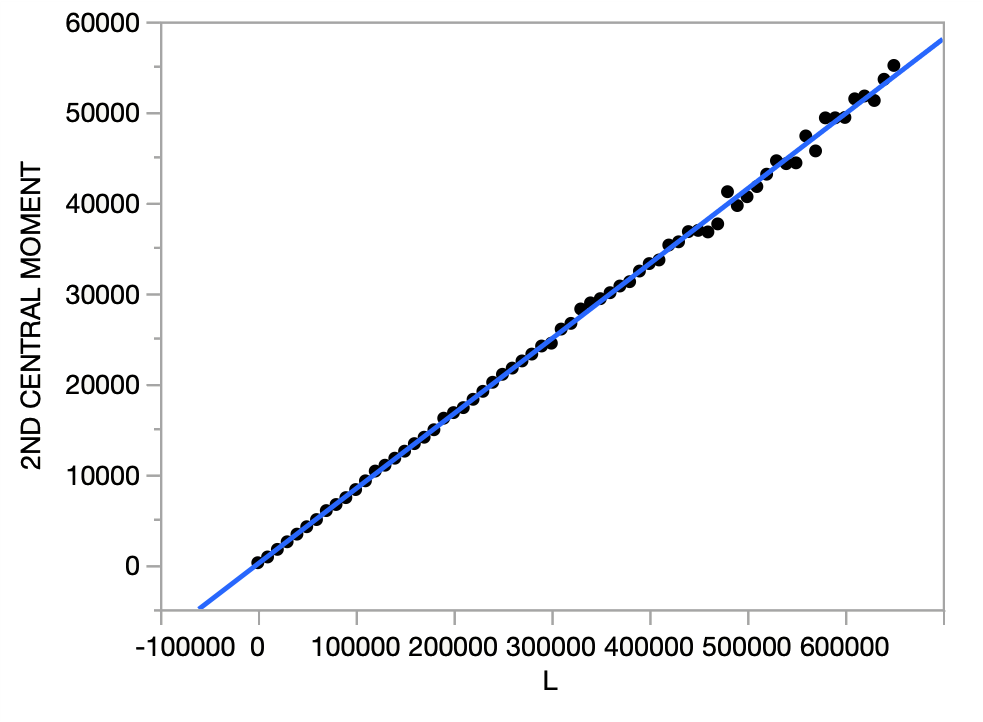}\caption{\raggedright }\label{fig:mu2_by_length}
\end{subfigure}
\begin{subfigure}[b]{0.45\textwidth}
\includegraphics[width=\textwidth]{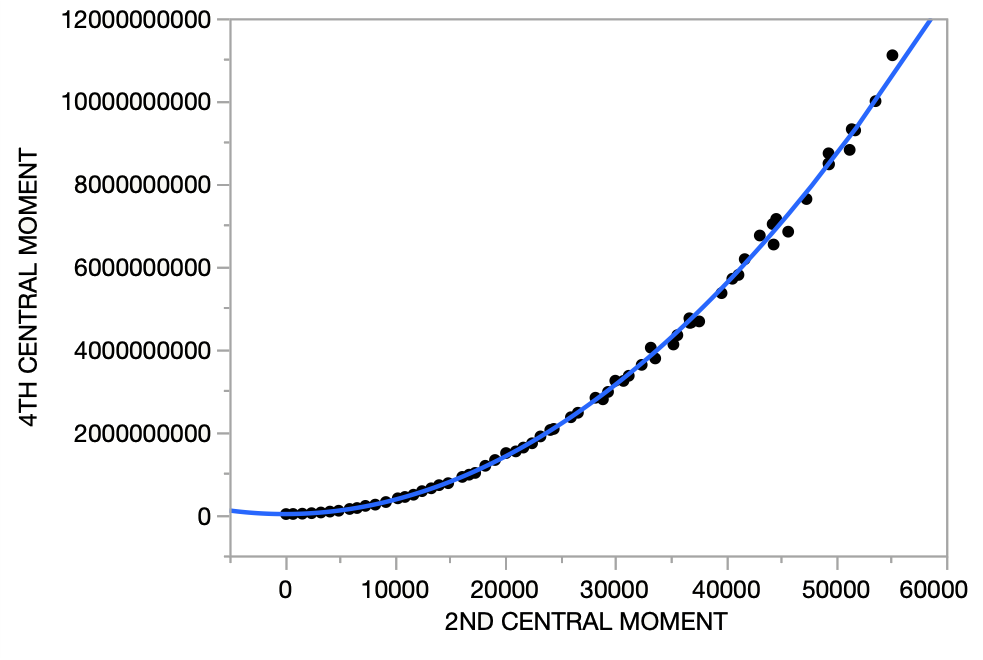}\caption{\raggedright }\label{fig:mu4_by_mu2_by_length}
\end{subfigure}
\caption{\raggedright Writhe vs knot length for randomly generated knot grid diagrams: (\subref{fig:mu2_by_length}) variance vs. length and (\subref{fig:mu4_by_mu2_by_length}) fourth moment vs. variance for $l \leq 650,000$.}\label{fig:writhe_by_length2}
\end{figure}

In this case, of the 65 cases sampled, only one ($l=0$, that is, knots of length 0-4999) does not contain 0 in its 95\% confidence interval. Figure~\ref{fig:writhe_by_length} demonstrates this, and it suggests a standard deviation proportional to $\sqrt l$. We again model the data (see Figure~\ref{fig:writhe_by_length2}) as
\[\sigma^2 = 0.0829184l\]
and
\[m_4 = 3.4763056\sigma^4,\]
suggesting a kurtosis around 3.48, both with $p < 0.0001$.

\subsection{Mathematical analysis}

The behavior of writhe on grid diagrams suggests it is susceptible to analysis via generating functions, but it was resistant to our efforts. We leave this project for future work.

\subsection{Other link models}

The numerical results above suggest writhe of random knots in a grid diagram displays behavior similar to writhe of random knots under the Petaluma model, where a knot may be described by a permutation. Even-Zohar \cite{even2017writhe} demonstrated a limit distribution for writhe; as with the grid diagrams, Petaluma knots come in pairs with opposite writhe (in this case, by reversing the permutation), so odd moments are trivial, but the variance tends to 2/3 and the kurtosis to 3.8.

\section{Generating functions}\label{sect:gf}

We gather here proofs of several results used in Section~\ref{sect:numb_comp}.

\begin{proof}[Proof of Theorem~\ref{thm:gf}] We begin by deriving formulae for $c_n$ and $c_{n,k}$. Consider an arbitrary link in an $n \times n$ grid. Let $e_i$ be the number of components that use $i$ rows and columns. We assume $e_1 = 0$ since this is a grid diagram, and $2e_2 + 3e_3 + \cdots = n$. To count the number of links which satisfy a given choice of component sizes $e_2, e_3, e_4, \cdots$, we must first divide the $n$ rows among the components: that is, we must divide them into subsets with $e_2$ of size $2$, $e_3$ of size $3$, and so on. There are $\frac{n!}{(2!)^{e_2}(3!)^{e_3}} \cdots$ ways to do this. Repeat for the rows. Next, since this was not a proper partition (we can distinguish between two subsets of the same size), we now divide by $e_2!e_3!\cdots$. Finally, for each subset  
\begin{multline*}c_n = \sum_{2e_2+3e_3+\cdots=n} \frac{n!n!(2!1!)^{e_2}(3!2!)^{e_3}\cdots}{((2!)^{e_2}(3!)^{e_3}\cdots)^2(e_2!e_3!\cdots)} \\= \sum_{2e_2+3e_3+\cdots=n} \frac{n!n!}{2^{e_2}3^{e_3}\cdots} \cdot \frac{1}{e_2!e_3!\cdots}\end{multline*}

To find the number of links with exactly $k$ components $c_{n,k},$ we need merely add the condition that $e_2+e_3+\cdots = k$:
\[c_{n,k} = \sum_{\substack{2e_2+3e_3+\cdots=n \\ e_2+e_3+\cdots = k}} \frac{n!n!}{2^{e_2}3^{e_3}\cdots} \cdot \frac{1}{e_2!e_3!\cdots}\]

These formulas are not particularly easy to manipulate on their own, so we employ generating functions to make them manageable. Herbert Wilf wrote, ``A generating function is a clothesline on which we hang up a sequence of numbers for display.'' Recall that an ordinary generating function for $\overline c_n$ is some $g(x)$ with MacLaurin series $g(x) = \sum_{n = 0}^\infty \overline c_n x^n$, and so $\overline c_n =\frac{1}{n!}\frac{\partial^ng}{\partial x^n}(0)$. See, e.g., Tucker\cite[Ch. 6]{tucker} or Wilf\cite{wilf}. For $\overline c_n$ above, consider the function 
\[\displaystyle g(x) = \left(1 + \frac{x^2}{2}+ \frac{\left(\frac{x^2}{2}\right)^2}{2!}+ \cdots\right) \left(1 + \frac{x^3}{3} +\frac{\left(\frac{x^3}{3}\right)^2}{2!} + \cdots\right) \cdots \]
We may think of $e_2$ as telling us which term to select from the first parentheses, then $e_3$ which term to select from the next, and so on. The power series expansion has as its degree $n$ term the sum of all formal products 
\[\left(\frac{x^2}{2}\right)^{e_2}\left(\frac{x^3}{3}\right)^{e_3} \cdots = \frac{x^{2e_2+3e_3+\cdots}}{2^{e_2}3^{e_3} \cdots} = \frac{x^n}{2^{e_2}3^{e_3} \cdots}\]
because $2e_2 + 3e_3 + \cdots = n$; this sum happens to be the same as $n!n! c_n x^n= \overline c_nx^n$. That is, 
\[g(x) = \sum_{n=0}^\infty \overline c_n x^n.\]
The expansion of $g(x)$ above is a little awkward; we may rewrite it as  
\[\displaystyle g(x) = e^\frac{x^2}{2}e^\frac{x^3}{3}\cdots = e^{\frac{x^2}{2}+\frac{x^3}{3}+\cdots}.\]
Additionally, 
\begin{multline*}\frac{\partial }{\partial x}\left[\frac{x^2}{2}+\frac{x^3}{3}+\cdots\right] = x+x^2+\cdots \\= \frac{1}{1-x}-1 = \frac{\partial }{\partial x}\left[-\ln|1-x|-x\right].\end{multline*}
We restrict ourselves to the case of $x < 1$, and we perform a quick check at $x=0$ to verify that \mbox{$\frac{x^2}{2}+\frac{x^3}{3}+\cdots = -\ln|1-x|-x$}, so 
\[g(x) = e^{-\ln|1-x|-x}=(1-x)^{-1}e^{-x}.\]

We next need a multivariable generating function for $\overline c_{n,k}$. Let
\begin{multline*}
 G(x,y) =
 \left(1 + \frac{x^2y}{2}+ \frac{\left(\frac{x^2y}{2}\right)^2}{2!} + \cdots\right)\\
\cdot \left(1 + \frac{x^3y}{3} +\frac{\left(\frac{x^3y}{3}\right)^2}{2!} + \cdots\right) \cdots
\end{multline*}
which has as its degree $(n, k)$ term the sum of formal products
\[\left(\frac{1}{2}x^2y\right)^{e_2}\left(\frac{1}{3}x^3y\right)^{e_3} \cdots = \frac{x^ny^k}{2^{e_2}3^{e_3}\cdots}\]
for $2e_2 + 3e_3 + \cdots = n$ and $e_2 + e_3 \cdots = k$. This sum is again exactly $\overline c_{n,k}x^ny^k$, so $G(x,y)$ is a generating function for $\overline c_{n,k}$, or
\[G(x,y) = \sum_{n,k=0}^\infty \overline c_{n,k} x^ny^k.\]
We may also write it:
\[G(x,y) = e^{\left(\frac{x^2}{2}+\frac{x^3}{3}+\cdots\right)y} = \left(g(x)\right)^y = (1-x)^{-y}e^{-xy},\]
and of course $g(x) = G(x,1)$. 

We observe that we could fix $n$ and find a generating function for $\overline c_{n,k}$ as $k$ varies, 
\[f(y) = \sum_{k=0}^\infty \overline c_{n,k}y^k =  \frac{1}{n!}\frac{\partial ^n G}{\partial x^n}(0,y).\]
In other words,
\begin{multline*}
f(y) = \frac{1}{n!}\frac{\partial ^n}{\partial x^n} \left[(1-x)^{-y}e^{-xy}\right]_{x=0} \\
=\frac{1}{n!}\sum_{i=0}^n{n \choose i} \frac{\partial^{i}}{\partial x^{i}}[e^{-xy}]_{x=0}  \frac{\partial^{n-i}}{\partial x^{n-i}}\left[(1-x)^{-y}\right]_{x=0}\\
= \frac{1}{n!}\sum_{i=0}^n \frac{n!}{i!(n-i)!} \left[(-y)^ie^{-xy}\right]_{x=0} \left[\frac{(y+n-i-1)!}{(y-1)!(1-x)^{y+n-i}}\right]_{x=0}\\
=\sum_{i=0}^n (-1)^i\frac{(y+n-i-1)!}{i!(n-i)!(y-1)!} y^i
 = \sum_{i=0}^n(-1)^i{y+n-i-1 \choose y-1}\frac{y^i}{i!}.
\end{multline*}

On the other hand, we could also fix $k$ and find a generating function for $\overline c_{n,k}$ as $n$ varies. This function should take the form:
\begin{multline*}
h(x) = \sum_{k=0}^\infty \overline c_{n,k} x^n = \frac{1}{k!}\frac{\partial^k G}{\partial y^k}(x,0)
  = \frac{1}{k!}\frac{\partial^k}{\partial y^k}\left[\left((1-x)^{-1}e^{-x}\right)^y\right]_{y=0}\\
= \frac{1}{k!}\left[\left((1-x)^{-1}e^{-x}\right)^y\left(\ln\left((1-x)^{-1}e^{-x}\right)\right)^k\right]_{y=0}\\
=\frac{\left(-\ln(1-x)-x\right)^k}{k!} = (-1)^k\frac{\left(\ln(1-x)+x\right)^k}{k!}.
\end{multline*}
\end{proof}

\begin{proof}[Proof of Theorem~\ref{thm:cnk}]
Assume $n>1$. We will actually verify the bound 
\[\overline c_{n,k} \leq \frac{(\log_2 n)^{k-1}}{n}.\]
This bound is sharp for $k=1$ since $\overline c_{n,1} = \frac{1}{n}$ by Theorem~\ref{thm:cn}. 

For larger $k$, we use the generating function $h(x)$ from Theorem~\ref{thm:gf}:
\begin{multline*}
\overline c_{n,k} = \frac{1}{n!}\frac{\partial^n}{\partial x^n}\left[\frac{1}{k!}(-\ln(1-x)-x)^k\right]_{x=0}\\ 
 = \frac{1}{n!}\frac{\partial^{n-1}}{\partial x^{n-1}}\left[\frac{1}{(k-1)!}\left(-\ln(1-x)-x\right)^{k-1} \left(\frac{1}{1-x}-1\right)\right]_{x=0}\\
=\frac{1}{n!}\sum_{i=0}^{n-1} {n-1 \choose i}\frac{\partial^i}{\partial x^i}\left[\frac{\left(-\ln(1-x)-x\right)^{k-1}}{(k-1)!}\right]_{x=0}\\ 
\cdot \frac{\partial^{n-i-1}}{\partial x^{n-i-1}}\left[\frac{1}{1-x}-1\right]_{x=0}\\
=\frac{1}{n!}\sum_{i=1}^{n-2}\frac{(n-1)!}{i!(n-i-1)!}  i!\overline c_{i,k-1} (n-i-1)!
=\frac{1}{n}\sum_{i=1}^{n-2} \overline c_{i,k-1}
\end{multline*}
which, inducting on $k$, gives
\[\overline c_{n,k} \leq \frac{1}{n}\sum_{i=1}^{n-2} \frac{(\log_2 i)^{k-2}}{i}
\leq \frac{1}{n}(\log_2 n)^{k-2}\sum_{i=1}^{n-1} \frac{1}{i} \leq \frac{1}{n}(\log_2 n)^{k-1}\]
where the last inequality comes from the classical harmonic bound $H_{n-1} \leq \log_2 n$.
\end{proof}

\begin{proof}[Proof of Corollary~\ref{cor:fwdconj}]
Observe that 
\[ \lim_{n\rightarrow \infty}\frac{(\log_2 n)^{k-1}}{n} = 0\]
by a repeated application of l'H\^opital's rule, so $\overline c_{n,k} \rightarrow 0$ as well. Since $\overline c_n \rightarrow \frac{1}{e}$ by Theorem~\ref{thm:cn}, 
\[\lim_{n\rightarrow \infty} \frac{c_{n,k}}{c_n} = \lim_{n\rightarrow \infty} \frac{\overline c_{n,k}}{\overline c_n} = 0.\]
In other words, $\Pr (k \mid n) \rightarrow 0$. The result for a fixed link type follows trivially: if all links of $k$ components become vanishingly rare, then the particular ones which represent a given link type become even rarer.
\end{proof}

\begin{proof}[Proof of Theorem~\ref{thm:last_thm}]
To study expected value, note that (after fixing $n$) the probability of a randomly selected link having exactly $k$ components is
\[p_k = \frac{c_{n,k}}{c_n} = \frac{\overline c_{n,k}}{\overline c_n}\]
which has probability generating function
\[\displaystyle P(y) = \sum_{k=0}^\infty p_ky^k =  \sum_{k=0}^\infty\frac{\overline c_{n,k}}{\overline c_n}y^k = \frac{1}{\overline c_n} f(y).\]
where $f(y)$ is the generating function for $\overline c_{n,k}$ (with $n$ fixed) from Theorem~\ref{thm:gf}. The expected value will be 
\[EV = P'(1) = \frac{1}{\overline c_n}\frac{\partial f}{\partial y}(1)\]
and the variance will be
\[Var = P''(1) - P'(1)\]

Before we proceed, note that, if $i < n$,
\begin{multline*}
\frac{\partial}{\partial y}\left[{y+n-i-1 \choose y-1}\right] = \frac{\partial}{\partial y}\left[\frac{1}{(n-i)!}\prod_{j=0}^{n-i-1}(y+j)\right]\\
= \frac{1}{(n-i)!}\sum_{k=0}^{n-i-1} \frac{\prod_{j=0}^{n-i-1}(y+j)}{y+k}\\ 
= {y+n-i-1 \choose y-1}\sum_{k=0}^{n-i-1} \frac{1}{y+k}\end{multline*}
In particular, for $0 \leq i \leq n$ (if we assume the trivial case \mbox{$H_0 = 0$}),
\[\frac{\partial}{\partial y}\left[{y+n-i-1 \choose y-1}\right]_{y=1} = H_{n-i}\]
and
\begin{multline*}
\frac{\partial^2}{\partial y^2}\left[{y+n-i-1 \choose y-1}\right]_{y=1}\\ 
 = \frac{\partial}{\partial y}\left[ {y+n-i-1 \choose y-1}\sum_{k=0}^{n-i-1} \frac{1}{y+k}\right]_{y=1}\\
 = \left[ {y+n-i-1 \choose y-1}\left(\left(\sum_{k=0}^{n-i-1} \frac{1}{y+k}\right)^2 -\sum_{k=0}^{n-i-1} \frac{1}{(y+k)^2}\right)\right]_{y=1}\\ 
 =H_{n-i}^2-H_{n-i,2}
\end{multline*}

Therefore,
\begin{multline*} 
EV(k) =\frac{1}{\overline c_n} \sum_{i=0}^n\frac{\partial}{\partial y}\left[(-1)^i{y+n-i-1 \choose y-1}\frac{y^i}{i!}\right]_{y=1}\\
= \frac{1}{\overline c_n}\sum_{i=0}^{n}(-1)^i\frac{H_{n-i}+i}{i!} =  \frac{1}{\overline c_n}\sum_{i=0}^{n-1}(-1)^i\frac{H_{n-i}-1}{i!}
\end{multline*}
and the variance obeys
\begin{multline*}
Var(k) + EV(k) = \frac{1}{\overline c_n} \sum_{i=0}^n\frac{\partial^2}{\partial y^2}\left[(-1)^i{y+n-i-1 \choose y-1}\frac{y^i}{i!}\right]_{y=1}\\
=\frac{1}{\overline c_n}\sum_{i=0}^{n}(-1)^i\frac{H^2_{n-i}-H_{n-i,2} + 2iH_{n-i} + i(i-1)}{i!}
\end{multline*}
so
\[Var(k) =\frac{1}{\overline c_n}\sum_{i=0}^{n}(-1)^i\frac{H^2_{n-i}-H_{n-i,2} + (2i-1)H_{n-i} + i(i-2)}{i!}\]

\end{proof}

\section{Future work}\label{sect:future_work}

The exploratory data analysis on writhe above begs to be verified mathematically. 

We are also involved in a future exploration\cite{doigtypicalii} of genus as a model of knot complexity; in particular, we investigate the effects of a crossing change and tangle change on genus.

\section{Acknowledgements}

Thanks to Creighton University for giving me an environment in which I am free to wander about the mathematical world and explore new things. Thanks also to my student Billy Duckworth, whose ideas during his summer research inspired me to start digging into the world of typical knots. Finally, thanks to to Chaim Even-Zohar, who provided several insightful suggestions for this manuscript, 
and to the many other previous researchers referenced here whose work was inspiring and interesting as well as highly informative.

\bibliography{../../Documents/math/biblio-MASTER}

\end{document}